\newtheorem{lemma}{Lemma}
\newtheorem{proposition}{Proposition}
\newtheorem{definition}{Definition}
\newtheorem{question}{Question}
\newenvironment{proof}{\noindent{\bf Proof.\,}}{\hfill$\Box$~\\}
\newtheorem{theorem}{Theorem}
\title{List precoloring extension in planar graphs}
\author{Maria Axenovich\thanks{Supported in part by NSA grant H98230-09-1-0063 and NSF grant DMS-0901008} \thanks{Department of Mathematics, Iowa State University, Ames, IA 50011} \and Joan P. Hutchinson\thanks{Department of Mathematics, Statistics and Computer Science, Macalester College, St Paul, MN 55105} \and Michelle A. Lastrina\footnotemark[2]}
\date{March 21, 2011}
\begin{document}
\maketitle

\begin{abstract}
A celebrated result of Thomassen states that not only can every planar graph be colored properly with five colors, but no matter how arbitrary  palettes of five colors are assigned to vertices, one can choose a color from the corresponding palette for each vertex so that the resulting coloring is proper. This result is referred to as $5$-choosability of planar graphs. Albertson asked whether Thomassen's theorem can be extended by precoloring some vertices which are at a large enough distance apart in a graph. Here, among others, we answer the question in the case when the graph does not contain short cycles separating precolored vertices and when there is a ``wide" Steiner tree containing all the precolored vertices.
\end{abstract}

\section{Introduction}
Let $G$ be a graph, let $L: V(G) \rightarrow 2^{\mathbb{N}}$ be an assignment of lists of colors to vertices of $G$. We say that a coloring $c:V(G) \rightarrow \mathbb{N}$ is an $L$-coloring, or {\it proper coloring from lists $L$} if $c(v)\in L(v)$ for $v\in V(G)$, and $c(u) \neq c(v)$ if $uv \in E(G)$. When such an $L$-coloring occurs, we say that $G$ is \textit{$L$-colorable}.  For extensive literature on list-colorings of planar graphs we refer the reader to \cite{ERT, M, T, T2, Vi, V}. Thomassen \cite{T} proved that if $G$ is a planar graph and $|L(v)|=5$ for each vertex $v\in V(G)$, then $G$ is $L$-colorable.  Let $P$ be a subset of vertices in a graph $G$.  We say that a {\it precoloring of P} is \textit{extendable to a $5$-list coloring} of $G$ if for every list assignment $L: V(G) \rightarrow 2^{\mathbb{N}}$, such that $|L(v)|=1$ for $v\in P$ and $|L(v)|=5$ for $v \in V(G)-P$, $G$ is $L$-colorable.  Albertson \cite{A} posed the following question.

Let $G$ be a plane graph.  Is there a $d>0$ such that whenever $P\subset V$ is such that the distance between every pair of vertices of $P$ is at least $d$, then every precoloring of $P$ extends to a $5$-list coloring of $G$?

Here, the distance \textit{dist$(x,y)$} between a pair of vertices $x$ and $y$ is the number of edges in a shortest path joining them.  Tuza and Voigt showed in \cite{TV}, see also \cite{VW}, that the condition of a large distance between precolored vertices is essential by finding a planar graph $G$ with a set of precolored vertices at pairwise distance at least $4$ such that the precoloring is not extendable to a $5$-list coloring of $G$.  So, the distance $d$ in the above question should be at least $5$.
Does this question have a positive answer if $d\geq 1000$?  The original theorem of Thomassen \cite{T} implies that if there are two adjacent precolored vertices assigned distinct colors, then the precoloring is extendable to a $5$-list coloring of $G$.  B\"{o}hme et al. \cite{BMS} described when the precoloring of vertices on a short face with at most six vertices can be extended to a $5$-list coloring of a planar graph.

In this manuscript, we introduce a technique using shortest paths in planar graphs which allows us to answer Albertson's question for a wide class of planar graphs. We prove that a proper precoloring of a pair of vertices can always be extended to a $5$-list coloring of a planar graph provided they are not separated by $3$- or $4$-cycles.  We also provide results about extensions of precolorings of vertices on one face.  Finally, we answer Albertson's question in the case where there are no $3$- or $4$-cycles separating precolored vertices and there is a special tree containing all of the precolored vertices.

To state our main results in all their generality, we need to define several notions.
We say a set of vertices $X$ separates a set of vertices $P$ in a connected graph $G$ if there are at least two vertices of $P$ contained in distinct connected components of $G-X$.  If $X$ is an $i$-vertex set separating $P$ in $G$ and spanning $C_i$, we say $G$ contains a $P$-separating $C_i$.
If $X$ separates $V(G)$ we say $X$ separates $G$, or $X$ is a separating set in $G$.
For a set of vertices $P$ in a graph $G$, \textit{dist$(P)=$ dist$(P,G)$} is the smallest distance in $G$ between two vertices of $P$.
For a path $S$, with endpoints $u$ and $v$, we say a vertex $w$ is central if the distances in $S$ from $w$ to $u$ and from $w$ to $v$ differ by at most $1$. Note there are at most two central vertices in $S$.  For graph theoretic terminology not defined here, we refer the reader to \cite{W}.

\begin{definition}
Let $G$ be a planar graph, $P$ a subset of vertices of $G$.  Fix a positive integer $d$.  Let $T$ be a tree with $P\subseteq V(T)$.  Let the set of special vertices be the union of $P$ and the set of vertices of degree either $1$ or at least $3$ in $T$.  A path in $T$ with special vertices as endpoints and containing no other special vertices is called a branch of $T$.

\noindent We say a tree $T$ is \textbf{$(P,d)$-Steiner} if \\
(1) every branch has length at least $2d$,\\
(2) every branch is a shortest (in $G$) path between its endpoints,\\
(3) if $v_c$ is a center of a branch of $T$, then a shortest (in $G$) path between $v_c$ and every vertex in another branch has length at least $d$, and \\
(4) no two vertices of $T$ from distinct branches have a common neighbor outside of $T$ nor are they adjacent.
\end{definition}

For example, when $P= \{u, v\}$ is a set of two vertices at distance $30$ from each other, a shortest $(u,v)$-path is a $\left(P, 15\right)$-Steiner tree with a single branch.

We say that a set $X$ of four vertices of degree at most $5$ in a graph $G$ forms the configuration $D=D(X)$, if $G[X]$ is isomorphic to $K_4-e$.  See Figure \ref{fig:D-reducible} for an illustration of $D$.
\begin{figure}[h]
\begin{center}
\subfloat[][Configuration $D$.]{\label{fig:D-reducible}\includegraphics{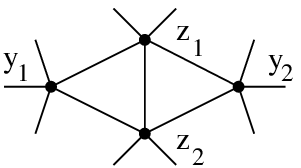}}
\qquad
\subfloat[][Configuration $W$.]{\label{fig:W-reducible}\includegraphics{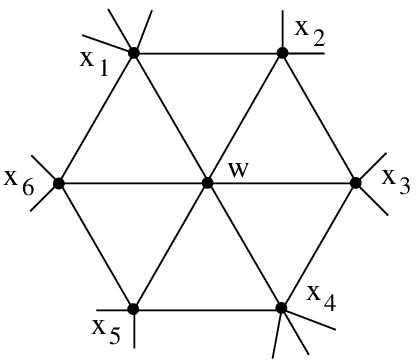}}
\end{center}
\caption{Reducible configurations.}
\label{fig:reducible}
\end{figure}

We say that a set $X$ of seven vertices of degree at most $6$ in a graph $G$ forms the configuration $W=W(X)$, if $G[X]$ induces a $6$-wheel, formed from a central vertex $w$ adjacent to a $6$-cycle $x_1, x_2, x_3, x_4, x_5, x_6, x_1$ such that $x_2,\,x_3,\,x_5$ and $x_6$ have degree at most $5$ in $G$.  See Figure \ref{fig:W-reducible} for an illustration of $W$.

For a graph $G$ and a set of precolored vertices $P$, let $R(G)=R(G, P)$, a \textbf{reduction of }$G$, be a graph obtained by one of the following operations:
(1) for a separating $C_3$ or $C_4$ that does not separate $P$, remove from $G$ the vertices and edges in the region that is bounded by the separating $C_3$ or $C_4$ and that does not contain any vertices of $P$,
(2) for a configuration $D=D(X)$ such that $P\cap X=\emptyset$, remove $X$ from $G$, or
(3) for a configuration $W=W(X)$ such that $P\cap X=\emptyset$, remove $X$ from $G$.
If none of these operations can be carried out, let $R(G)=G$.

Consider a sequence of graphs $G=G_0\supset G_1 \supset G_2 \supset \cdots \supset G_m$ such that $G_i=R(G_{i-1}, P)$ for $i=1, \ldots, m$ and $R(G_m)=G_m$.
Call such a graph $G_m$ a {\bf reduced} graph of $G$.
A reduced graph does not have a separating $C_3$ or $C_4$ that does not separate $P$ and it contains no configurations $D(X)$ or $W(X)$ with $P\cap X=\emptyset$.
We shall show that if a reduced graph of $G$ has a coloring extension of $P$, then so does $G$.

We now state the main results of this manuscript.

\begin{theorem} \label{Steiner}
Let $G$ be a plane graph, let $P$ be a set of  vertices such that there is no $P$-separating $C_3$ or $C_4$ in $G$.
If there is a reduced graph of $G$ that has a  $(P,45)$-Steiner tree, then every precoloring of $P$ is extendable to a proper $5$-list coloring of $G$.
\end{theorem}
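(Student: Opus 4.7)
The plan is to proceed in three main stages: reduction, tree coloring, and gluing.

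\textbf{Stage 1 (reduction).} I would first show that it suffices to prove the theorem for a reduced graph $G_m$. The paper already announces this, but its verification amounts to checking that each of the three reduction operations preserves extendability of $P$-precolorings. For a separating $C_3$ or $C_4$ that does not separate $P$, I would color the side containing $P$ first; the deleted side is a planar graph with a $3$- or $4$-cycle whose vertices already have colors, and Thomassen's theorem (applied with the boundary cycle treated as a precolored face) extends the coloring. For configurations $D(X)$ and $W(X)$ disjoint from $P$, I would color $G-X$ by induction, then put the $|X|\le 7$ vertices back; since they have degree at most $5$ (for $D$) or at most $6$ with specific degree-$5$ structure (for $W$), a short case analysis using the wheel/triangle-with-pendant structure produces an $L$-coloring from the remaining choices in each list. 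So from now on I may assume $G$ itself is reduced and possesses a $(P,45)$-Steiner tree $T$.

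\textbf{Stage 2 (coloring the tree).} The heart of the argument is to produce an $L$-coloring of $T$ that agrees with the precoloring on $P$ and which can then be extended to all of $G$. Each branch $B$ of $T$ is a shortest path of length $\ge 90$ between two special endpoints. I would first fix a color at every branch-endpoint that is not already precolored (the degree-$\ge 3$ vertices of $T$); any color from its list works as a starting choice, and the wideness conditions (3) and (4) will later decouple these choices. Then each branch is a shortest path whose two endpoints have fixed colors and whose interior is to be list-colored. Here I expect to invoke an earlier lemma of the paper (the ``pair of precolored vertices'' case, or its shortest-path refinement) which says: given a shortest path of length $\ge 2d$ with both endpoints colored, its interior admits an $L$-coloring which further extends to any $5$-list assignment of the surrounding planar graph, provided the path is not separated by short cycles. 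The central vertex $v_c$ of the branch is the natural point at which to split this coloring into two halves so that the propagation from each colored endpoint travels only distance $\ge d=45$.

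\textbf{Stage 3 (gluing).} Once $T$ is entirely colored, I treat $T$ as a precolored subgraph and extend to $G-T$. Condition (4) guarantees that vertices from different branches are not adjacent and share no common neighbor outside $T$, so the constraints imposed on $G-T$ by each branch of $T$ are essentially local to that branch. Condition (3) guarantees that in every face of $G$, two branches contributing to its boundary are far apart, so the facial pieces look, from the viewpoint of the extension, like a planar graph with a single precolored shortest path on the boundary. The extension to each such region is then the content of a Thomassen-type result (a shortest path in a planar graph can be precolored and extended), which has already been proved in the manuscript in the two-vertex case and can be applied piece by piece to each component of $G-T$.

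\textbf{Main obstacle.} The delicate step is Stage 2 at the branch points. At a special vertex $v$ of degree $k \ge 3$ in $T$, a single color in $L(v)$ must simultaneously serve as the ``boundary condition'' for the coloring propagation along all $k$ incident branches, while preserving enough list freedom on neighbors in $G-T$. The length $45$ (giving $2d=90$ on each branch, so each half-branch has length $\ge 45$) is chosen precisely so that once $v$'s color is fixed, each of the $k$ shortest half-branches emanating from $v$ has enough room to accommodate the propagation lemma independently. Verifying that the propagation lemmas compose consistently across branch points -- i.e., that the choices made near $v$ do not over-constrain any neighboring region in $G-T$ -- will be the most careful part of the argument, and is exactly what conditions (3) and (4) in the definition of a $(P,d)$-Steiner tree are engineered to supply.
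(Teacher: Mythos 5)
Your Stage 1 (reduction) and the overall framing are on track — they match the paper's Claim 2 and the idea of coloring a connected induced subgraph that contains $P$ and then invoking Proposition \ref{nicelyH}. But Stage 2 contains a genuine gap that is precisely where the paper's main technical work lives.

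You assume a lemma of the form ``given a shortest path of length $\ge 2d$ with both endpoints colored, its interior admits an $L$-coloring which further extends to any $5$-list assignment of the surrounding planar graph.'' No such lemma is available, and for good reason: a shortest path $S$ between two precolored vertices cannot in general be \emph{nicely} colored (in the paper's sense of leaving every neighbor at least three free colors). The obstructions are exactly long runs of overlapping $3$-neighbors of $S$ — the $4$-blocks and $5$-blocks of $Q(S)$ — which can force a $3$-neighbor down to two available colors no matter how $S$ is colored. This is why the configurations $D$ and $W$ appear in the theorem at all: their presence is what makes such runs unavoidable, and their absence is what Lemma \ref{nice4} exploits. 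The content of Lemma \ref{nice4} is not that $S$ can be nicely colored, but that one can \emph{deviate from $S$ near its center}, replacing the middle $40$ edges by a different connected subgraph $H'$ (built from an alternative shortest path plus a few attached vertices) that \emph{can} be nicely colored, and the deviation stays within distance $21$ of the branch center. Your proposal has no mechanism for this deviation, and without it the propagation argument you sketch breaks down whenever $Q(S)$ consists of nothing but consecutive $4$- or $5$-blocks.

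Relatedly, you identify the branch points as the ``main obstacle,'' but in fact condition (4) of the Steiner-tree definition already forces distinct branches to be non-interacting there, and the paper handles the branch points by simply fixing an arbitrary color at each non-precolored special vertex and then applying Lemma \ref{nice4} branch by branch. The delicate constants ($d=45$, tag distance $40$, deviation radius $21$) are not chosen to give propagation room from the endpoints, as you suggest, but to guarantee that the deviated pieces $H'$ from distinct branches stay pairwise at distance at least $3$ — so that the union $H=\bigcup_{S}H(S)$ is still nicely colored and Proposition \ref{nicelyH} applies. Your Stage 3, which proposes a piece-by-piece Thomassen extension over components of $G-T$, is replaced in the paper by this single application of Proposition \ref{nicelyH} to the connected graph $H$, which is both simpler and avoids having to reason about the interaction of separate ``pieces.''
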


\begin{theorem}\label{two}
Let $G$ be a plane graph and $u, v\in V(G)$.  If $G$ has no $\{u, v\}$-separating $C_3$ or $C_4$, then every proper precoloring of $\{u,v\}$ is extendable to a proper $5$-list coloring of $G$.
\end{theorem}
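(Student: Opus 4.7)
My strategy is to reduce Theorem \ref{two} to Theorem \ref{Steiner} whenever possible, and to treat the residual small-distance situation directly. Set $P=\{u,v\}$. First I would pass to a reduced graph $H$ of $G$: by the reductions-preserve-extensions assertion in the excerpt it suffices to extend the precoloring on $H$, and $H$ still contains no $P$-separating $C_3$ or $C_4$ and no configuration $D(X)$ or $W(X)$ disjoint from $P$. If $u=v$ or $uv\in E(H)$, Thomassen's 5-choosability theorem immediately extends the (properly chosen) precoloring, so assume $u\ne v$, $uv\notin E(H)$, and set $\ell=d_H(u,v)\ge 2$.

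\textbf{Long-distance subcase ($\ell\ge 90$).} Pick a shortest $u$-$v$ path $S$ in $H$ and view it as a tree whose only two special vertices are $u$ and $v$, giving a single branch (namely $S$ itself) of length $\ell\ge 90=2\cdot 45$. Condition (1) of the $(P,45)$-Steiner definition is satisfied because $\ell\ge 90$; condition (2) holds by the choice of $S$ as a shortest path in $H$; and conditions (3) and (4) are vacuous since there are no other branches. Thus $S$ is a $(P,45)$-Steiner tree in the reduced graph $H$, and Theorem \ref{Steiner} extends the precoloring to all of $G$.

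\textbf{Short-distance subcase ($2\le\ell<90$).} Here Theorem \ref{Steiner} is no longer directly available, so a direct argument is needed; this is the step I expect to be the main obstacle. My plan would be to induct on $|V(H)|$, exploiting the rich structure forced by the absence of small $P$-separating cycles and of $D$- and $W$-configurations avoiding $P$. Concretely, take a shortest $u$-$v$ path $S$ in $H$; since there is no $\{u,v\}$-separating $C_3$ or $C_4$ and $\ell\ge 2$, one can find a second internally disjoint $u$-$v$ path, which together with $S$ yields a cycle $C$ splitting the plane into two subregions. I would then list-color each subregion in turn by a Thomassen-style precolored-path argument, using the global hypotheses to guarantee that the colorings along $S$ can be made to agree.

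The main obstacle is the list-coloring across the path $S$: its internal vertices carry full $5$-lists rather than the $3$-lists used in Thomassen's classical precolored-path lemma, so the lemma does not apply off the shelf. To overcome this I would either first choose colors for the interior of $S$ carefully (restricting downstream lists to size $3$ on one side of $S$) and then invoke Thomassen, or set up an induction that strips off small configurations near $S$ while preserving the nonexistence of separating $C_3$'s, $C_4$'s and of $D$- and $W$-configurations outside $P$. Either way, the bounded-distance regime is where the technical heart of the argument lies, and the delicate bookkeeping near the precolored vertices is what I expect to require the most care.
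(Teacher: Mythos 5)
Your long-distance reduction is sound: when $\ell\ge 90$, a shortest $u$-$v$ path in a reduced graph is indeed a single-branch $(P,45)$-Steiner tree (conditions (3) and (4) are vacuous), so Theorem~\ref{Steiner} applies. But the theorem is really about all distances, and your short-distance subcase ($2\le\ell<90$) is not a proof --- it is a plan that you yourself flag as incomplete, and the specific plan you sketch does not work. In particular, the premise that one can always find ``a second internally disjoint $u$-$v$ path'' is false in general: $u$ could have degree one, or there could be a cut vertex separating $u$ from $v$; the absence of $\{u,v\}$-separating $C_3$'s and $C_4$'s does not give 2-connectivity. And even granting a cycle through $u$ and $v$, it is unclear how the two subregions would be colored compatibly, since Thomassen's precolored-path result requires lists of size 3 along the boundary path, not fresh 5-lists as you note. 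So the step that you identify as ``the technical heart of the argument'' is left open, and this is a genuine gap.

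The paper's actual proof is both simpler and uniform in the distance. It does not split by $\ell$, does not pass to a reduced graph, and does not use the $D$/$W$ configurations at all (those are only needed for Theorem~\ref{Steiner}). It proceeds by induction on $|V(G)|$: first it upgrades the hypothesis to ``no separating $C_3$ or $C_4$ whatsoever,'' because any such cycle not separating $\{u,v\}$ can be handled by induction plus Theorem~\ref{bms}. Then it takes a shortest $u$-$v$ path $S$, nicely colors $v_0Sv_{m-2}$ via Lemma~\ref{nice1}, and does a short case analysis at the precolored end $v_m=v$: if no $3$-neighbor touches both $v_{m-1}$ and $v_m$ (or $v_{m-2}$ and $v_{m-1}$), the nice coloring goes through; if exactly one does, the coloring of $G-S$ still succeeds because only a single boundary vertex has a list of size 2 (which Thomassen's argument tolerates at a single spot); and if both such $3$-neighbors exist, the paper uses a clever gadget --- delete $S$, splice in two new adjacent vertices $t,s$ with brand-new forced colors $\alpha,\beta$, and enlarge the relevant boundary lists by $\alpha$ or $\beta$ so that Thomassen's theorem applies verbatim. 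To repair your proof you would essentially have to reinvent this local argument near the precolored endpoints; the cycle-decomposition idea you propose is unlikely to yield it.
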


\begin{theorem}\label{one-face}
Let $G$ be a plane graph and $C$ a set of vertices of a facial cycle of $G$. Let $P = \{v_0, v_1, \ldots, v_{k-1}\}\subseteq  C$, where the vertices of $P$ are labeled cyclically around $C$.
Then every proper precoloring of $P$ is extendable to a $5$-list coloring of $G$ if one of the following conditions holds:
\begin{enumerate}
\item $G[P]$ consists of disjoint vertices and edges with pairwise distance at least $3$,
\item $k\leq 6$ and none of the following occur:
    \begin{enumerate}
    \item There is a vertex $u \in V(G) - P$ adjacent to at least five vertices of $P$ such that $L(u)$ consists of the colors assigned to those five vertices.
    \item $k = 6$ and there is an edge $u_0u_1$  and a color $\alpha$ such that, for $i=0,1$, the vertex $u_i$ is adjacent to $v_{3i+1},v_{3i+2},v_{3i+3},v_{3i+4}$, where addition of indices is modulo $k$, and $L(u_i)$ consists precisely of the colors assigned to those four vertices and $\alpha$.
    \item $k=6$ and there is a triangle $(u_0,u_1,u_2)$ and colors $\alpha,\beta$, such that, for $i=0,1,2$, the vertex $u_i$ is adjacent to $v_{2i+1},v_{2i+2},v_{2i+3}$, where addition of indices is modulo $k$, and $L(u_i)$ consists of the colors assigned to those three vertices and $\alpha,\beta$.
    \end{enumerate}
\end{enumerate}
\end{theorem}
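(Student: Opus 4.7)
The plan is to prove Theorem~\ref{one-face} by induction on $|V(G)|$, applying a common framework to both parts. Begin with a reduction step: if $G$ contains a separating $C_3$ or $C_4$ that does not separate $P$, or a configuration $D(X)$ or $W(X)$ with $X\cap P=\emptyset$, then apply the relevant reduction operation to pass to a proper subgraph, extend the precoloring there by induction, and lift back to $G$ using the reducibility lemmas developed earlier in the paper. So we may assume $G$ is reduced.

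For Part 1, I would strengthen the inductive hypothesis in the spirit of Thomassen's proof of $5$-choosability. Namely, permit the outer face to be any cycle $C$ containing $P$, permit non-precolored outer-face vertices to have lists of size at least $3$ and internal vertices lists of size $5$, and allow several precolored components of the specified type on $C$. The inductive step then processes one precolored component at a time. If the component is a precolored edge of $C$, apply Thomassen's theorem to the adjacent region. If the component is a single precolored vertex $v_i$, ``absorb'' its color into the lists of its outer-face neighbors (shrinking each by one color), delete $v_i$, and argue that the new instance still satisfies the strengthened hypothesis. The assumption that distinct components of $G[P]$ lie at pairwise distance $\geq 3$ is precisely what ensures that absorptions around different components touch disjoint sets of non-precolored vertices, so no list ends up too small.

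For Part 2, I would appeal to the theorem of B\"ohme, Mohar and Stiebitz~\cite{BMS} characterizing precoloring extensions on faces of length at most six. The three excluded configurations (a), (b), (c) in the statement correspond exactly to the obstructions identified in that work. The task is then a case analysis on $k\in\{3,4,5,6\}$ (with $k\leq 2$ handled by Theorem~\ref{two} or by Thomassen's theorem directly), verifying that the BMS obstructions translate into precisely these three configurations in the reduced graph setting. Absence of $D$, $W$, and small separating cycles avoiding $P$ keeps the local complexity bounded and limits the case analysis.

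I expect the main obstacle to be the absorption step in Part 1. After shrinking the lists of neighbors of a deleted $v_i$, we must re-verify the strengthened hypothesis, and further reductions triggered by separating cycles may bring formerly-internal vertices to the outer face, where a later absorption nearby could shrink their lists again. Choosing the order in which to process the precolored components (guided by the distance $\geq 3$ condition) is where the bookkeeping becomes delicate. For Part 2, the main work is a clean verification that no further obstruction arises for $k\leq 6$, which should closely parallel the BMS classification but must be carried out in the presence of the $D$- and $W$-exclusions inherited from the reduction step.
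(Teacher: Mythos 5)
Your overall framing diverges from the paper's argument and, for Part~2, leaves out the step that makes the proof work.

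The reduction preamble (removing separating $C_3$'s and $C_4$'s and the configurations $D$ and $W$) is not used in the paper's proof of Theorem~\ref{one-face} at all, and invoking it here creates a real difficulty: the hypothesis of the theorem is that $P$ lies on a facial \emph{cycle}, and it is not automatic that this property survives removal of a $D(X)$ or $W(X)$ with $X\cap P=\emptyset$ (face boundaries can merge and need not remain cycles). Since the reductions are neither needed nor obviously hypothesis-preserving, this step should be dropped.

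For Part~1, your inductive-absorption scheme in the style of Thomassen's proof is a genuinely different and much heavier route than the paper's. The paper's argument is two lines: because the components of $G[P]$ are at pairwise distance at least~$3$, every $x\notin P$ has at most two neighbors in $P$; delete $P$ and the used colors, note $N(P)$ lies on the boundary of one face of $G-P$ with lists of size at least~$3$ and all other vertices have lists of size~$5$, and apply Thomassen's theorem directly. Your approach could be made to work, but the bookkeeping you flag (lists on the outer face being shrunk multiple times, and reductions moving interior vertices to the boundary) is exactly the kind of difficulty the direct argument avoids entirely, and you have not resolved it.

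For Part~2 there is a concrete gap. Theorem~\ref{bms} applies when the precolored vertices \emph{are} a facial cycle of length at most~$6$; in Theorem~\ref{one-face}(2) they are merely a subset $P$ of a possibly much longer facial cycle $C$. You write that the proof ``appeals to BMS'' and does ``a case analysis on $k$,'' but you never say how to bridge this mismatch. The paper's key move is to modify $G$ inside the outer face: delete the edges of $C$ joining consecutive $v_i$'s and add the edges $v_iv_{i+1}$ (indices mod $k$) in the outer face, producing a plane graph $G'$ in which $P$ is the vertex set of a facial cycle of length $k\le 6$. To make $G'[P]$ properly colorable under its lists (as BMS requires), the paper also replaces the precolor on each $v_i$ by a fresh color $c_i$ and adjusts neighboring lists accordingly, then translates a coloring of $G'$ back to an $L$-coloring of $G$. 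Without this edge-addition and color-replacement device your proposal does not reduce the problem to the BMS setting, so the case analysis you describe has nothing to act on.
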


\begin{theorem}\label{two-faces}
Let $P$ be a set of vertices in a plane graph $G$, $dist(P)\ge3$, such that there are two faces $F_1,F_2$ where the vertices of $P$ lie on the boundaries of $F_1$ and $F_2$.
Assume $G$ contains no $P$-separating $C_3$ or separating $C_4$.
Then every precoloring of $P$ is extendable to a proper $5$-list coloring of $G$.
\end{theorem}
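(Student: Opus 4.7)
The strategy is to reduce to the single-face case handled by Theorem~\ref{one-face}. Because $dist(P)\ge 3$, the induced subgraph $G[P]$ consists of isolated vertices pairwise at distance at least three, which is precisely condition~(1) of Theorem~\ref{one-face}. In particular, the trivial sub-case $P\subseteq\partial F_1$ (or $P\subseteq\partial F_2$) is immediate. We therefore assume that both $P_1:=P\cap\partial F_1$ and $P_2:=P\cap\partial F_2$ are nonempty.

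The central idea is to merge $F_1$ and $F_2$ into a single face by cutting $G$ along a shortest path joining them. Choose a shortest path $Q=q_0q_1\cdots q_l$ from a vertex of $\partial F_1\setminus P$ to a vertex of $\partial F_2\setminus P$; such endpoints exist because a face boundary carrying $k$ vertices of $P$ has length at least $3k$ by the distance hypothesis. Shortest-path minimality forces $Q$ to be an induced path meeting $\partial F_1\cup\partial F_2$ only at its endpoints. Now cut $G$ open along $Q$: duplicate each internal vertex $q_i$ into two copies $q_i'$ and $q_i''$, each receiving the non-$Q$ neighbors lying on the corresponding side of $Q$ in the planar embedding, and duplicate the edges of $Q$ accordingly. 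The resulting plane graph $G'$ has $F_1$ and $F_2$ coalescing into a single face $F^*$ whose boundary contains all of $P$. Since splitting vertices can only increase graph distances, $dist_{G'}(P)\ge 3$; any separating $C_3$ or $C_4$ of $G'$ corresponds to one in $G$, so the absence of such cycles in $G$ is inherited, and after (if necessary) a further reduction of $G'$ the hypotheses of Theorem~\ref{one-face}(1) are met, yielding an extension $c'$ of the precoloring.

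The remaining obstacle, which is the most delicate step, is the descent of $c'$ to a coloring of $G$: this requires $c'(q_i')=c'(q_i'')$ at each split vertex. I would enforce this by precoloring the interior vertices of $Q$ greedily before cutting — each $q_i$ has a $5$-element list and at most two precolored neighbors along $Q$ at the moment it is considered, so a valid color can always be chosen — so that both copies of $q_i$ in $G'$ inherit the same pre-assigned color automatically. This enlarges the precolored set to $P\cup V(Q)$, all of which lies on $F^*$; to conclude, I would invoke either a strengthened form of Theorem~\ref{one-face} that accommodates a precolored path on the face, or a Thomassen-style induction on $|V(G')|$ that peels off boundary vertices of $F^*$ one at a time while preserving $5$-list-colorability. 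Subtleties arise when some $P$-vertex is forced to lie on $Q$, or when a $P$-vertex is adjacent to an endpoint of $Q$; both cases are controlled by the distance hypothesis $dist(P)\ge 3$.
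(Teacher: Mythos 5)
Your overall strategy — cut $G$ open along a path joining the two faces to merge them into one, then appeal to a single-face result — is different from the paper's proof, which instead deletes $P$, adds one auxiliary universal vertex inside each of the (at most two) residual faces, gives both new vertices the \emph{same} singleton list $\{\alpha\}$ for a fresh color $\alpha$, pads the depleted lists with $\alpha$, and then invokes Theorem~\ref{two} (two precolored vertices, no separating short cycles). That reduction to Theorem~\ref{two} avoids all the geometric bookkeeping of your cut.

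The difficulty is that your proposal leaves the central step unproved. After cutting, the boundary of $F^*$ contains, in consecutive arcs, the two copies $Q'$ and $Q''$ of the path $Q$ together with the vertices of $P$; once you greedily precolor $Q$, the precolored set on $F^*$ contains long runs of \emph{adjacent} precolored vertices, which puts you outside the hypotheses of Theorem~\ref{one-face}(1) (pairwise distance at least $3$) and of Theorem~\ref{one-face}(2) (at most six precolored vertices). You acknowledge this by asking for ``a strengthened form of Theorem~\ref{one-face} that accommodates a precolored path on the face,'' but no such statement is proved in the paper, and it is not a routine extension: an arbitrary proper coloring of a facial path can already make some interior vertex's residual list drop below three, so Thomassen's theorem does not apply directly.

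A second, related gap is the greedy precoloring of $Q$ itself. Because $Q$ is a shortest path, Lemma~\ref{properties-shortest-path} permits vertices $w\notin Q$ with $d(w,Q)=3$; a greedy coloring that only avoids $Q$-neighbors' colors can therefore delete three distinct colors from $L(w)$, leaving $|L_c(w)|=2$, which Thomassen's theorem cannot absorb. To make this work you would need a \emph{nice} coloring of $Q$ in the sense of Lemmas~\ref{nice1}--\ref{nice3}; but those lemmas are stated with boundary conditions ($|L(u)|=|L(v_1)|=1$, or $|L(u)|=|L(v)|=4$, or two cut-edges) that do not match your situation, and moreover niceness alone does not resolve the runs-of-precolored-vertices problem above. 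Finally, the claim that ``splitting vertices can only increase graph distances'' and that separating $C_3$/$C_4$s in $G'$ pull back to $G$ needs a real argument: a cycle in $G'$ through $q_i'$ does project to a cycle in $G$ through $q_i$, but whether the \emph{separating} property transfers depends on which side of $Q$ the relevant $P$-vertices ended up on after the cut, and this is left unaddressed.
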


The rest of the paper is organized as follows. In Section \ref{Preliminaries} we state known results mentioned above in detail and prove some technical lemmas. We prove all of the theorems in Section \ref{Proofs}. Finally, we state open problems and comments in Section \ref{Conclusions}.

\section{Preliminaries} \label{Preliminaries}
\begin{theorem}[Thomassen's $5$-list coloring theorem \cite{T},\cite{T3}]\label{T}
Let $G$ be a plane graph, $F$ the set of vertices of a face of $G$,  $L: V(G)\rightarrow 2^{\mathbb N}$ an assignment of lists of colors to vertices of $G$ such that $|L(w)| = 5$ for all $w\not\in F$, $|L(u)|=|L(v)|=1$ with $L(u)\neq L(v)$ for some adjacent vertices $u, v\in F$, and $|L(w)|=3$ for $w\in F- \{u,v\}$.  Then $G$ is $L$-colorable.
\end{theorem}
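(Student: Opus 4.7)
The plan is to prove a strengthened form of the theorem by induction on $|V(G)|$, following the classical Thomassen argument. The strengthened statement is: if $G$ is a plane near-triangulation with outer cycle $C$ and $L$ is a list assignment with $|L(u)|=|L(v)|=1$ and $L(u)\neq L(v)$ for some adjacent $u,v\in C$, $|L(w)|\ge 3$ for $w\in C-\{u,v\}$, and $|L(w)|\ge 5$ for interior $w$, then $G$ has a proper $L$-coloring. The theorem reduces to this strengthened form: adding edges to $G$ can only make $L$-coloring harder, so I may triangulate all inner faces while keeping $F$ a face; if $F$ is not a cycle (because $G$ is not $2$-connected), a standard block-by-block decomposition lets me color each block with the $L$-coloring inherited on shared cut vertices.

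The base case $|V(G)|\le 3$ is trivial. For the inductive step I split into two cases. \textbf{Chord case:} if $C$ has a chord $xy$, then $C\cup\{xy\}$ separates $G$ into two near-triangulated pieces $G_1,G_2$ sharing the edge $xy$; say $u,v\in V(G_1)$. Apply the inductive hypothesis to $G_1$ to obtain a proper $L$-coloring, which in particular colors $x,y$ with distinct colors. Shrink $L(x),L(y)$ to these singletons and apply the inductive hypothesis again to $G_2$, whose outer cycle now has $xy$ as its precolored adjacent pair, while the remaining vertices retain their original list sizes ($\ge 3$ if on $C$, $\ge 5$ otherwise). \textbf{No-chord case:} let $v_2$ be the neighbor of $u$ on $C$ distinct from $v$'s side, and let the neighbors of $v_2$ in cyclic order be $u=v_1, w_1,\dots, w_t, v_3$, where $v_3$ is the next $C$-vertex after $v_2$. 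Because $G$ is a near-triangulation and $C$ has no chord, each $w_i$ is an interior vertex. Pick two colors $\alpha,\beta\in L(v_2)\setminus L(u)$, delete $v_2$, and replace $L(w_i)$ by $L(w_i)\setminus\{\alpha,\beta\}$, still of size $\ge 3$. The graph $G-v_2$ is a near-triangulation whose outer cycle replaces the edge-path $u v_2 v_3$ with $u w_1\cdots w_t v_3$; all hypotheses of the inductive statement are satisfied, so an $L$-coloring exists. Finally color $v_2$ with whichever of $\alpha,\beta$ differs from the color of $u$; at least one choice works since the coloring of $w_1,\dots, w_t$ already avoids both $\alpha$ and $\beta$.

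The main obstacle is to verify that every reduction preserves the entire hypothesis package simultaneously. The delicate point is in the no-chord case: one needs that each $w_i$ is genuinely interior in $G$ (so its original list had size at least $5$ and remains of size at least $3$ after deleting two colors), and that the resulting outer boundary of $G-v_2$ is still a cycle with $u,v$ an adjacent precolored edge on it. Both conditions follow from the near-triangulation structure together with the no-chord assumption on $C$. Once this bookkeeping is checked, the induction closes and the strengthened statement — hence Theorem \ref{T} — follows.
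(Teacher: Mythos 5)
The paper does not prove Theorem~\ref{T}: it is Thomassen's theorem, stated as a known tool with citations to \cite{T} and \cite{T3}, so there is no internal proof to compare yours against. Your argument is a reconstruction of the classical Thomassen induction, and its skeleton (strengthening to near-triangulations with a precolored boundary edge and $\ge 3$-lists on the rest of the outer cycle, reducing to that setting by triangulating and splitting into blocks, then the chord case versus the no-chord case) is the right one.

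There is, however, a slip in the last line of the no-chord case that as written makes the step fail. You choose $\alpha,\beta\in L(v_2)\setminus L(u)$. Since $L(u)$ is a singleton, \emph{both} $\alpha$ and $\beta$ already differ from the color of $u$, so the instruction to color $v_2$ with ``whichever of $\alpha,\beta$ differs from the color of $u$'' picks no particular one, and the one you happen to take may equal the color assigned to $v_2$'s other boundary neighbor $v_3$. The correct final step is: color $v_2$ with whichever of $\alpha,\beta$ differs from $c(v_3)$. That choice is admissible because the lists of $w_1,\dots,w_t$ had both $\alpha,\beta$ deleted, $c(u)\notin\{\alpha,\beta\}$ by construction, and at most one of $\alpha,\beta$ equals $c(v_3)$. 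You should also spell out the block reduction slightly more: root the block tree at the block containing $uv$; in every other block the shared cut vertex $x$ is already colored, and one picks $y$ adjacent to $x$ on that block's outer boundary and precolors it with some color in $L(y)\setminus\{c(x)\}$ (possible since $|L(y)|\ge3$) to manufacture the required precolored adjacent pair. With those two repairs the induction closes.
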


\begin{theorem}[Tuza-Voigt \cite{TV}]
There is a planar graph $G$ and a set $P$ of vertices with $dist(P) \geq 4$ and an assignment of lists of size $3$ to vertices of $P$ and lists of size $5$ to the remaining vertices such that $G$ is not colorable from these lists.
\end{theorem}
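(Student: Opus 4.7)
Unlike the theorems preceding it, this statement asserts the existence of a counterexample, so the plan is to construct one rather than give a general argument. I would build a planar graph $G$ together with a list assignment $L$ (size $3$ on $P$, size $5$ elsewhere) so that $G$ admits no proper $L$-coloring, in three layers: a small planar ``core'' $H$ whose $5$-list coloring is possible in general but fails once a prescribed color is forbidden at each of a few designated boundary vertices $b_1,\dots,b_k$ lying on a common face; planar ``propagation'' gadgets $G_i$ attached to $H$ at the $b_i$; and distinguished vertices $p_i\in V(G_i)$ with size-$3$ lists that form the set $P$.

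Concretely, $H$ would be hand-built from a small planar template (for instance $K_4$, $K_4-e$, a short wheel, or a fragment of one of Voigt's non-$k$-choosable planar graphs), with lists tuned so that forbidding a single color $c_i$ at each $b_i$ creates a forced chain of equalities that collapses into a contradiction. Each propagation gadget $G_i$ would contain a short path from $p_i$ to $b_i$ together with a few auxiliary vertices, and its lists would be chosen so that \emph{every} choice of color in $L(p_i)$ -- together with any valid extension to the rest of $G_i$ -- forces exactly the forbidden color $c_i$ on $b_i$. Placing the gadgets on distinct faces of $H$ keeps $G$ planar, and using internal paths of length at least $2$ inside each gadget (together with the fact that any short path between two $p_i$'s must transit through $H$) yields the required $\mathrm{dist}(P)\ge 4$.

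The main obstacle is the rigidity of the gadget design: the requirement that \emph{all three} possible colors from $L(p_i)$ force the \emph{same} forbidden color at $b_i$ rules out naive one-path gadgets and is where most of the combinatorial work lies. One natural way to meet this rigidity is to use a gadget whose internal list assignment mirrors, on a shifted palette, exactly the three colors of $L(p_i)$, so that no matter which color is chosen at $p_i$, the "missing" color is pushed outward and ultimately pinned at $b_i$. Secondary concerns -- planarity of the overall construction, verification of $\mathrm{dist}(P)\ge 4$, and checking that no alternative proper coloring of $H$ sidesteps the contradiction -- then reduce to a finite case analysis on the three color choices at each of the $p_i$ once the core and gadgets are fixed.
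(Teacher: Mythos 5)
This statement is a cited result of Tuza and Voigt; the paper itself offers no proof, only the reference, so the only question is whether your proposal would actually establish it. It does not yet. The theorem is purely existential, and its entire mathematical content is the explicit planar graph together with the explicit lists; everything else (checking planarity, the distance bound, and non-colorability) is routine once that object is on the table. Your write-up supplies an architecture (core $H$ plus propagation gadgets $G_i$) but defers exactly the part that constitutes the proof: no core, no gadget, and no list assignment is ever exhibited, and you yourself flag the gadget design as ``where most of the combinatorial work lies.'' A plan that says ``build a gadget with property X, then verify by finite case analysis'' proves nothing until the gadget is produced and the case analysis is carried out, because it is not evident that a planar gadget with property X exists at all.

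Moreover, the specific property you require of each gadget --- that \emph{every} color in the size-$3$ list of $p_i$ forces the \emph{same} color $c_i$ at $b_i$ --- is stronger than what is needed and is not how the known constructions work. The Tuza--Voigt example (in the spirit of Voigt's non-$4$-choosable graphs) does not deterministically funnel all three choices at a precolored vertex into one forbidden color; instead, for each pair of color choices at the two special vertices it contains a separate planar subgadget that becomes uncolorable under that particular pair, and these subgadgets are glued around the two special vertices in a planar way so that the distance between them is $4$. If you insist on your stronger ``uniform forcing'' property, you take on an extra burden (showing such a forcing gadget exists with only a size-$3$ list at $p_i$ and size-$5$ lists elsewhere) that the standard construction deliberately avoids. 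To turn this into a proof you must either exhibit your gadgets explicitly and verify them, or follow the case-splitting union-of-subgadgets route and exhibit those subgadgets instead.
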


\begin{theorem}[B\"{o}hme, et al. \cite{BMS}]\label{bms}
Let $G=(V,E)$ be a plane graph with facial cycle $C$ of length $k\leq6$, where the vertices of $C$ are labeled cyclically $v_0, v_1, \ldots, v_{k-1}$.  If $|L(v)|=1$ for $v\in V(C)$, $|L(v)|=5$ for $v\in V-V(C)$, and $G[V(C)]$ is $L$-colorable, then $G$ is $L$-colorable unless one of the following occurs:
\begin{enumerate}
\item There is a vertex $u\in V-V(C)$ adjacent to five vertices in $C$ and $L(u)$ consists exactly of the colors assigned to those five vertices.
\item $k=6$ and there is an edge $u_0u_1$, $u_0, u_1\not \in V(C)$  and a color $\alpha$ such that, for $i=0,1$, the vertex $u_i$ is adjacent to $v_{3i+1},v_{3i+2},v_{3i+3},v_{3i+4}$, where addition of indices is modulo $k$, and $L(u_i)$ consists precisely of the colors assigned to those four vertices and $\alpha$.
\item $k=6$ and there is a triangle $(u_0,u_1,u_2)$, $u_0, u_1, u_2\not\in V(C)$ and colors $\alpha,\beta$, such that, for $i=0,1,2$, the vertex $u_i$ is adjacent to $v_{2i+1},v_{2i+2},v_{2i+3}$, where addition of indices is modulo $k$, and $L(u_i)$ consists of the colors assigned to those three vertices and $\alpha,\beta$.
\end{enumerate}
\end{theorem}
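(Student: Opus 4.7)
The plan is to prove this by induction on $|V(G)|$, following the Thomassen template. Standard preprocessing reduces to the case where $G$ is $2$-connected and $C$ bounds the outer face. If $C$ has a chord $v_iv_j$ lying inside the disk bounded by $C$, that chord splits $G$ into two plane subgraphs each with strictly shorter outer facial cycle containing $v_i$ and $v_j$ precolored with distinct colors, and I would apply the induction hypothesis to each piece. One has to verify that obstructions (1)--(3) cannot be silently created by the splitting, but each of them has a rigid ``attachment'' pattern which is incompatible with a chord decomposition, so the splits remain safe. The base case $G=C$ is immediate from the hypothesis that $G[V(C)]$ is $L$-colorable.

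For the main step, I would introduce the auxiliary list assignment $L'(w) = L(w) \setminus \{c(v_i) : v_i \in N(w) \cap V(C)\}$ on $V\setminus V(C)$. If every internal vertex $w$ satisfies $|N(w)\cap V(C)| \le 2$, then $|L'(w)|\ge 3$ for every internal neighbor of $C$, and after precoloring an adjacent pair of vertices from their residual lists (taken from the boundary of $G - V(C)$), Thomassen's Theorem \ref{T} applied to a suitable modification of $G-V(C)$ delivers the extension. The dangerous case is when some internal vertex $u$ has three or more neighbors on $C$. When $|N(u)\cap V(C)|=5$ (possible only for $k\ge 5$), either $L(u)$ coincides with the forbidden set and we are in obstruction (1), or $L'(u)\neq\emptyset$, in which case I can precolor $u$ and induct on $G-u$.

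The heart of the proof, and the main obstacle, is the case $k=6$ with internal vertices having three or four neighbors on $C$. Planarity together with the chord-free hypothesis tightly constrains such configurations: two distinct internal vertices each adjacent to four consecutive vertices of $C$ can interlock only in the cyclic pattern appearing in obstruction (2), and three internal vertices each adjacent to three vertices of $C$ can coexist only in the pattern of obstruction (3). The induction step then becomes: whenever none of (1), (2), (3) holds, exhibit an internal vertex whose residual list $L'(u)$ contains a color not forced by the neighborhood, precolor $u$ with that color, and apply induction to $G-u$. Checking that configurations (1), (2), (3) genuinely block extension is a short direct list-coloring verification on the explicit small graphs; the real work is the finite case analysis ruling out all other obstructions, which is where planarity does the decisive counting.
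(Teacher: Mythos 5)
This statement is not proved in the paper at all: Theorem~\ref{bms} is quoted verbatim from B\"ohme, Mohar, and Stiebitz~\cite{BMS} and used as a black box, so there is no in-paper proof to compare your attempt against. That said, your sketch is also not close to a complete argument, and two of the gaps are substantive rather than cosmetic.

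First, the step ``precolor $u$ with a color from $L'(u)$ and apply induction to $G-u$'' does not preserve the induction hypothesis. After deleting the interior vertex $u$, every interior vertex adjacent to $u$ now has an effective list of size $4$, not $5$, so the theorem as stated no longer applies to $G-u$. Thomassen-style arguments handle this by strengthening the hypothesis (allowing size-$3$ lists on the outer boundary of the current region), but your plan keeps the hypothesis fixed, so this recursion is not licensed. A similar issue affects the case ``every interior vertex has at most two neighbors on $C$'': deleting $V(C)$ produces a graph where boundary vertices have lists of size $\ge 3$, but Theorem~\ref{T} requires a specific shape of boundary lists including a precolored adjacent pair; you gesture at ``a suitable modification'' without specifying it, and in particular you need the residual boundary of $G-V(C)$ to lie on a single face, which requires an argument (this is exactly the content of Proposition~\ref{nicelyH} in the paper, which you would need here).

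Second, the paragraph you label ``the heart of the proof'' is precisely where all the work of the B\"ohme--Mohar--Stiebitz theorem lives, and you defer it entirely. Classifying how $3$- and $4$-neighbors of $C$ can overlap under planarity, showing that only the three listed configurations are genuine obstructions, and verifying in each non-obstruction case that some interior vertex can be safely precolored and absorbed, is a delicate finite case analysis. Asserting that ``planarity does the decisive counting'' does not constitute a proof; indeed the existence of obstruction~(3), with three mutually adjacent interior vertices each seeing three boundary vertices, shows the interaction is subtle enough that a naive greedy choice of $u$ can fail. As a proof proposal for a result the paper only cites, what you have is a reasonable description of the proof's shape, but not a proof.
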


See Figure \ref{fig:onefaceforb} for illustrations of the forbidden configurations described in conditions 2 and 3 of Theorem \ref{bms}.  Note how these compare to the forbidden configurations of Theorem \ref{one-face}.
\begin{figure}[h]
\begin{center}
\subfloat[][Forbidden configuration of Theorem \ref{bms} condition 2.]{\label{fig:forbedge}\includegraphics{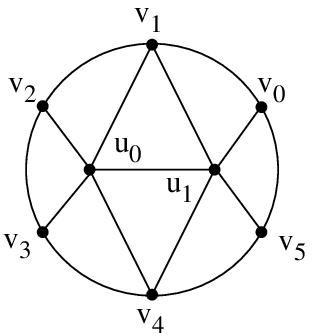}}
\qquad
\subfloat[][Forbidden configuration of Theorem \ref{bms} condition 3.]{\label{fig:forbtri}\includegraphics{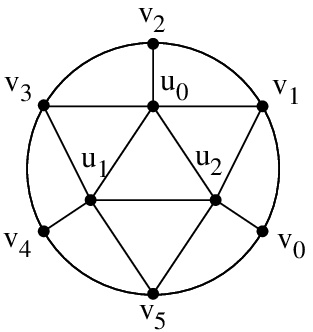}}
\end{center}
\caption{Forbidden configurations.}
\label{fig:onefaceforb}
\end{figure}

For a vertex $v\in V(G)$, we denote $N(v)=N(v,G)$ the neighborhood of $v$ in $G$.
For a vertex $v$ and a set of vertices $X$, we write $v\sim X$  if $v$ is adjacent to all vertices in $X$.
Let $H$ be a subgraph of $G$ and $c$ be a vertex coloring of $H$.
For $v \not\in V(H)$, let $c(v,H) = \{ c(u): u \in N(v) \cap V(H)\}$ be the set of colors used on neighbors of $v$ in $H$.
Let $d(v,H) = |N(v) \cap V(H)|$ be the size of the neighborhood of $v$ in $H$.
For a vertex set $X$ in $G$,  let $N(X) = N(G[X])$ be the set of neighbors of vertices from $X$ not in $X$.
For an induced subgraph $H$ of $G$ and $v\in V(G)-V(H)$, let $L_c(v, H) = L(v) - c(v,H)$.  When the subgraph $H$ is clear, we use $L_c(v)$.
We say $H$ is colored {\it {\bf  nicely}} by a coloring $c$ with respect to lists $L$ if $c$ is an $L$-coloring of $H$ and for every vertex $v\in N(H)$, $|L_c(v,H)|\geq 3.$
We also say $c$ is a {\it nice} coloring of $H$ in this case.
A vertex from $N(H)$ adjacent to at least three vertices in $H$ is called a {\it three-neighbor}, or simply $3$-neighbor, of $H$.
We denote the set of $3$-neighbors of $H$ by $N_3(H)$.
\begin{definition}
Let $Q(H)=G[H\cup N_3(H)]$ be the subgraph of $G$ induced by vertices of $H$ and its $3$-neighbors.
\end{definition}
For a path $S= v_0, v_1, \ldots, v_m$, and two vertices $v_i, v_j$ of $S$ we write $v_iSv_j$ to denote the subpath $v_i, v_{i+1}, \ldots, v_{j-1}, v_j$ of $S$.

The following proposition is almost identical to Thomassen's theorem 5.3 of \cite{T3}, with the added condition that $H$ contains all precolored vertices.  The proof is included for completeness.

\begin{proposition}\label{nicelyH}
Let $G$ be a planar graph and $P$ a set of vertices.  Let $L$ be an assignment of lists of colors such that $|L(v)|=1$ for $v\in P$ and $|L(v)|=5$ for $v\in V(G)-P$.
If there is an induced connected subgraph $H$ of $G$ containing all vertices from $P$ such that it can be nicely colored with respect to $L$, then $G$ is $L$-colorable.
\end{proposition}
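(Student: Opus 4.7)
The strategy is to use the nice coloring $c$ of $H$ as a springboard for a face-by-face application of Thomassen's list coloring theorem (Theorem \ref{T}).

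I would first define modified lists on $V(G) - V(H)$: for each uncolored vertex $v$, let $L'(v) = L_c(v, H) = L(v) \setminus c(v, H)$. The nice-coloring hypothesis gives $|L'(v)| \geq 3$ for $v \in N(H) - V(H)$ and $|L'(v)| = 5$ for every other uncolored vertex. It then suffices to produce an $L'$-coloring of $G - V(H)$, because the result combines with $c$ to yield an $L$-coloring of $G$ (the lists $L'$ already exclude the colors used on each uncolored vertex's $H$-neighbors).

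Since $H$ is connected and embedded in the plane, each component of $G - V(H)$ lies inside exactly one face of $H$. I would treat one component $G_0$ at a time, sitting inside a face $f$ of $H$. The key geometric observation is that every $v \in N(H) \cap V(G_0)$ lies on the face of $G_0$ that contains $\partial f$: the edge from $v$ to its $H$-neighbor $u \in V(\partial f)$ emerges from $v$ directly into this face. Designating this face as the outer face of $G_0$, every outer-face vertex has $|L'(v)| \geq 3$ and every interior vertex has $|L'(v)| = 5$.

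Finally, I would apply Theorem \ref{T} to $G_0$ via a short reduction (assuming $|V(G_0)| \geq 2$; otherwise color the single vertex directly from its nonempty list). Pick any edge $uv$ on the outer face of $G_0$, choose distinct colors $\alpha \in L'(u)$ and $\beta \in L'(v)$, and form a new list assignment $L''$ with $L''(u) = \{\alpha\}$, $L''(v) = \{\beta\}$, an arbitrary $3$-subset of $L'(w)$ for every remaining outer-face $w$, and $L''(w) = L'(w)$ for interior $w$. Theorem \ref{T} then produces an $L''$-coloring of $G_0$, which is in particular an $L'$-coloring. Repeating across all components of $G - V(H)$ and combining with $c$ gives the desired $L$-coloring of $G$. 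The main obstacle I expect is the geometric step---rigorously verifying that every $N(H)$-vertex in a component truly lies on that component's $\partial f$-containing face---which rests on a planarity argument about how edges to $V(H)$ exit the component and on choosing the embedding of $G_0$ so that this face is indeed its outer face.
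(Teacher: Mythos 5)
Your proof is correct and follows the same approach as the paper's: nicely color $H$, observe that uncolored vertices adjacent to $H$ retain lists of size at least $3$ while all other uncolored vertices keep lists of size $5$, and invoke Thomassen's theorem on $G - V(H)$. The paper simply writes ``by Thomassen's theorem, $G-V(H)$ is $L_c$-colorable'' without elaboration, whereas you carefully supply the missing details---the component-by-component decomposition, the planarity argument that each component's $N(H)$-vertices lie on a common face, and the edge-precoloring reduction needed to match the exact hypotheses of Theorem~\ref{T}---all of which is sound and makes the argument fully rigorous.
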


Note if $d(v,H)\leq 2$ for each $v\not\in V(H)$ then every proper coloring of $H$ is a nice coloring.

\begin{proof}
Consider a nice coloring $c$ of $H$.  Then $|L_c(v, H)|\geq3$ for all $v\in N(H)$ and $|L_c(v,H)|=5$ for all $v\in G-V(H)$.
Therefore, by Thomassen's theorem, $G-V(H)$ is $L_c$-colorable. Together with the coloring $c$ of $H$, this gives a proper $L$-coloring of $G$ as $L_c(v)\subset L(v)$ for all $v\in G-V(H)$.
\end{proof}

\begin{lemma} \label{properties-shortest-path}
Let $S$ be a shortest $(u,v)$-path in a planar graph $G$, where $S= v_0, v_1, \ldots, v_m$ with $u=v_0$, $v=v_m$. Then the following properties hold:\\
(1) for all $w\in N(S)$, $d(w,S) \leq 3$,\\
(2) for every $x,y\in V(S)$, $x\not\sim y$ in $G$ unless $\{x,y\} =\{v_i, v_{i+1}\}$, for $i=0, \ldots, m-1$,\\
(3) if $d(w, S)=3$ for some $w\in N(S)$, then $w\sim \{ v_i,v_{i+1},v_{i+2}\}$, for $i=0, 1, \ldots, m-2$,\\
(4) if there is no separating $C_3$ or $C_4$ in $G$, then for each $i$ with $i=0, 1, \ldots, m-2$ there is at most one vertex $w\in N(S)$ such that  $w\sim \{ v_i,v_{i+1},v_{i+2}\}$.
\end{lemma}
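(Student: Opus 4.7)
The plan is to prove (1)--(3) by short rerouting arguments that exploit the minimality of $S$, and to prove (4) by a planarity / Jordan-curve argument, split into two cases according to whether the two witnesses are adjacent.

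For (2), I would observe that if $v_iv_j \in E(G)$ with $i < j$ and $j - i \geq 2$, then $v_0, v_1, \ldots, v_i, v_j, v_{j+1}, \ldots, v_m$ is a $(u,v)$-walk of length $m-(j-i-1) < m$, contradicting that $S$ is a shortest $(u,v)$-path. For (1) and (3) I would apply the same idea to detours through a neighbor of $S$: if $w \in N(S)$ and $w \sim v_{j_1}, \ldots, v_{j_k}$ with $j_1 < \cdots < j_k$, then replacing the sub-path $v_{j_1}Sv_{j_k}$ by the length-$2$ detour $v_{j_1}, w, v_{j_k}$ yields a $(u,v)$-walk of length $m - (j_k - j_1) + 2$; minimality forces $j_k - j_1 \leq 2$, giving $k \leq 3$ (which is (1)) and consecutive indices when $k = 3$ (which is (3)).

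For (4), I would argue by contradiction, assuming distinct $w, w' \in N(S)$ both satisfy $\{v_i, v_{i+1}, v_{i+2}\} \subseteq N(w) \cap N(w')$. If $ww' \in E$, consider the triangle $T$ on $\{w, v_{i+1}, w'\}$. Both $v_i$ and $v_{i+2}$ are adjacent to all three vertices of $T$; a standard planarity observation (two vertices each joined to the three vertices of a triangle cannot lie in the same open disk bounded by $T$, else the required edges cannot be drawn without a crossing) places $v_i$ and $v_{i+2}$ in opposite open disks of the Jordan curve $T$. Since $v_iv_{i+2} \notin E$ by (2), every $v_iv_{i+2}$-path in $G$ must meet $V(T)$, so $T$ is a separating $C_3$, a contradiction. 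If $ww' \notin E$, consider the $4$-cycle $C$ on $v_i, w, v_{i+2}, w'$ (in cyclic order, using the four edges $v_iw, wv_{i+2}, v_{i+2}w', w'v_i$, all guaranteed by hypothesis). The vertex $v_{i+1}$ is adjacent to all four vertices of $C$, so by the analogous planarity argument it lies strictly in one of the two regions bounded by $C$, say the interior. Examining the rotation at $v_i$, the edges $v_iw$, $v_iv_{i+1}$, $v_iw'$ appear consecutively on the interior side of $C$, so the remaining edge $v_iv_{i-1}$ of $S$ (when $i \geq 1$) must lie in the exterior sector; symmetrically for $v_{i+2}$. Hence at least one of $v_{i-1}, v_{i+3}$ lies outside $C$, producing a vertex of $G - V(C)$ on the opposite side of $C$ from $v_{i+1}$, so $C$ is a separating $C_4$, again a contradiction.

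The main obstacle is the second case of (4): I must justify, via the rotation systems at $v_i$ and $v_{i+2}$, that the continuations $v_iv_{i-1}$ and $v_{i+2}v_{i+3}$ of $S$ exit $C$ on the opposite side from $v_{i+1}$. This rests on the observation that the three edges $v_iw$, $v_iv_{i+1}$, $v_iw'$ already saturate the interior angular sector at $v_i$, forcing any remaining incident edge outward. Extremely short paths (e.g.\ $m = 2$, where neither $v_{i-1}$ nor $v_{i+3}$ exists) would need separate care, by locating any other vertex of $G$ adjacent to $v_i$, $v_{i+1}$, or $v_{i+2}$ lying outside $C$ to produce the required separation.
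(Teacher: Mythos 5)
Your proofs of (1)--(3) are correct; they are the standard shortcut arguments that the paper invokes implicitly with the phrase ``because $S$ is a shortest $(u,v)$-path,'' so here you have merely filled in what the paper leaves to the reader. For (4), your split on whether $ww'\in E(G)$ matches the paper's one-line claim that ``there is either a separating $C_3$ or a separating $C_4$,'' and the subcase $ww'\in E$ is handled correctly: the induced subgraph on $\{v_i,v_{i+1},v_{i+2},w,w'\}$ is $K_5$ minus the edge $v_iv_{i+2}$, any planar embedding of it places $v_i$ and $v_{i+2}$ in opposite open regions of the triangle $w v_{i+1} w'$, and since a $v_i$--$v_{i+2}$ path in a plane graph cannot cross that Jordan curve except at one of its vertices, the triangle separates $G$.

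The subcase $ww'\notin E$ has a genuine gap, and you have in effect flagged it yourself. Placing $v_{i+1}$ strictly inside the chordless $4$-cycle $C=v_i,w,v_{i+2},w'$ is correct (it is the hub of a $W_4$), but the rotation claim that follows --- that $v_iw$, $v_iv_{i+1}$, $v_iw'$ ``saturate'' the interior angular sector at $v_i$ and therefore force $v_iv_{i-1}$ outside --- is not justified: nothing in a rotation system prevents several edges at $v_i$ from entering the same sector, and indeed $v_{i-1}$ (together with the whole initial segment $v_0Sv_{i-1}$, which by (2) and (3) touches $V(C)\cup\{v_{i+1}\}$ only at $v_i$) may lie inside $C$. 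Moreover the ``extremely short path'' caveat at the end is not a mere boundary annoyance. Take $G$ on $\{v_0,v_1,v_2,w,w'\}$ consisting of $K_{2,3}$ with parts $\{w,w'\}$ and $\{v_0,v_1,v_2\}$ plus the path edges $v_0v_1,v_1v_2$. Then $S=v_0,v_1,v_2$ is a shortest path, $G$ is plane, and $G$ has no separating $C_3$ or $C_4$ (each such cycle leaves at most one vertex), yet $w$ and $w'$ are both adjacent to all of $v_0,v_1,v_2$; the $4$-cycle $C$ is simply a face of $G$. So part (4) as stated actually fails here, and the argument cannot be completed without an extra hypothesis (e.g.\ that $C$ is not a face, or that some vertex of $G$ lies outside $C$ --- as will be the case in every situation where the paper later applies the lemma). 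The paper's terse ``it is easy to verify'' glosses over exactly this point, so the omission is not peculiar to your write-up, but the rotation step should be replaced by a correct argument for locating a vertex outside $C$, and the facial-$C_4$ case should be excluded explicitly.
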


\begin{proof}
Items (1)-(3) hold because $S$ is a shortest $(u,v)$-path.
To see the validity of item (4), assume there are two vertices adjacent to $v_i, v_{i+1}, v_{i+2}$. Then it is easy to verify that there is either a separating $C_3$ or  a separating $C_4$ in $G$.
\end{proof}

Note that Lemma \ref{properties-shortest-path} implies that if $S$ is a shortest path between two vertices of a planar graph $G$, then every block of $Q(S)$ with at least three vertices of $S$ has vertex set $\{v_i, v_{i+1}, \ldots, v_{i+k}, w_{i+1}, w_{i+2}, \ldots, w_{i+k-1}\}$, for some $k\ge2$, where $v_i, \ldots, v_{i+k}$ are consecutive vertices of $S$ and $w_{i+j} \sim \{v_{i+j-1}, v_{i+j}, v_{i+j+1}\}$, for $j = 1, \ldots, k-1$.
Observe that because $S$ is a shortest path and there are no separating $C_3$s or $C_4$s in $G$, the vertices of $Q(S)-S$ form an independent set.
We call a block of $Q(S)$ with $i$ vertices of  $S$ an \textbf{i-block}, $i=2, 3, 4, \ldots$.  See Figure \ref{fig:blocks} for examples of blocks in $Q(S)$. Note also that the block-cut-vertex tree of $Q(S)$ is a path.
\begin{figure}[h]
\begin{center}
\includegraphics{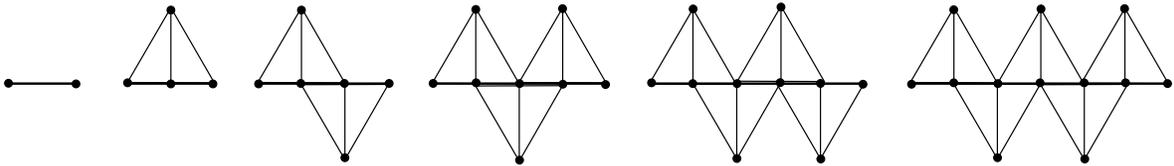}
\caption{Blocks in $Q(S)$, where the bold line indicates $S$.}
\label{fig:blocks}
\end{center}
\end{figure}
Note that if $Q(S)$ has a cut-edge, that edge is in $S$, and if $Q(S)$ has a cut-vertex, that vertex is in $S$.
We shall need a notion of a nontrivial block which will allow us to focus on subpaths of $S$ and not worry about the boundary conditions.
For a shortest $(u',v')$-path $T'$, we say an edge $e$ is a \textbf{nontrivial cut-edge} of $Q(T')$ if $e$ is a cut-edge not incident to either $u'$ or $v'$; we say $B$ is a \textbf{nontrivial block} of $Q(T')$ if $B$ is a block that does not contain $u'$ or $v'$.  We say a block $B$ is a \textbf{remote nontrivial block} of $Q(T')$ if $|V(B)\cap V(B_1)|=|V(B)\cap V(B_2)|=1$ where $B_1$ and $B_2$ are distinct nontrivial blocks of $Q(T')$.
Let $u', v' \in V(S)$ and let $T' = u'Sv'$. If $e$ is a nontrivial cut-edge in $Q(T')$, then it is easy to see that $e$ is a cut-edge in $Q(S)$; if $B$ is a nontrivial block of $Q(T')$, then $B$ is a block of $Q(S)$.

\begin{lemma}\label{nice1}
Let $S$ be a shortest $(u,v)$-path in a planar graph $G$, where $S= v_0, v_1, \ldots, v_m$ with $u=v_0$ and $v=v_m$.
Let $G$ have no separating $C_3$ and no separating $C_4$.
Let $L: V(G) \rightarrow 2^{\mathbb N}$ be an assignment of lists of colors to the vertices of $G$ where $|L(u)|=|L(v_1)|=1$ with $L(u)\neq L(v_1)$ and $|L(x)|=5$ for $x \in S\cup N(S) -\{u, v_1\}$.  Then $S$ can be nicely colored with respect to $L$.
\end{lemma}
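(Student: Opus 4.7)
The plan is to color $S$ greedily from $v_0$ to $v_m$, starting with the two forced colors $c(v_0)$ and $c(v_1)$ and then selecting $c(v_i)\in L(v_i)\setminus\{c(v_{i-1})\}$ for $i=2,3,\ldots,m$, while maintaining the invariant that after step $i$ the coloring is proper on $v_0v_1\cdots v_i$ and that every $3$-neighbor of $S$ whose entire $S$-neighborhood is already colored satisfies the niceness inequality $|L(w)\cap c(w,S)|\le 2$.

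By Lemma \ref{properties-shortest-path}, every $w\in N(S)$ has $d(w,S)\le 3$, so vertices with $d(w,S)\le 2$ contribute at most two colors to $c(w,S)$ and thus satisfy niceness automatically once $c$ is proper. Only $3$-neighbors $w$ with $w\sim\{v_{j-1},v_j,v_{j+1}\}$ can threaten niceness, and by items (3)--(4) of that lemma, together with the hypothesis that $G$ has no separating $C_3$ or $C_4$, for each $j$ there is at most one such $w$. Hence when coloring $v_i$, the only new niceness constraint beyond properness comes from the (unique, if it exists) $3$-neighbor $w_i$ with $w_i\sim\{v_{i-2},v_{i-1},v_i\}$; I need to ensure $|L(w_i)\cap\{c(v_{i-2}),c(v_{i-1}),c(v_i)\}|\le 2$.

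Fix $i\ge 2$. If $w_i$ does not exist, or $\{c(v_{i-2}),c(v_{i-1})\}\not\subseteq L(w_i)$, then any $c(v_i)\in L(v_i)\setminus\{c(v_{i-1})\}$ (four choices) preserves the invariant. Otherwise $\{c(v_{i-2}),c(v_{i-1})\}\subseteq L(w_i)$, and the niceness condition demands that $c(v_i)$ either equal $c(v_{i-2})$, so that $\{c(v_{i-2}),c(v_{i-1}),c(v_i)\}$ has only two distinct values, or lie in $L(v_i)\setminus(L(w_i)\cup\{c(v_{i-1})\})$. If $c(v_{i-2})\in L(v_i)$, I set $c(v_i)=c(v_{i-2})$, which is a legal color since the invariant gives $c(v_{i-2})\neq c(v_{i-1})$. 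If instead $c(v_{i-2})\notin L(v_i)$, I claim $L(v_i)\setminus(L(w_i)\cup\{c(v_{i-1})\})$ is nonempty: the assumption $L(v_i)\subseteq L(w_i)$ combined with $|L(v_i)|=|L(w_i)|=5$ would force $L(v_i)=L(w_i)$, giving $c(v_{i-2})\in L(w_i)=L(v_i)$, a contradiction; hence some $x\in L(v_i)\setminus L(w_i)$ exists, and $x\neq c(v_{i-1})$ because otherwise $c(v_{i-1})\notin L(w_i)$ would contradict the case assumption, so setting $c(v_i)=x$ works.

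The main obstacle is verifying the tight subcase in which $c(v_{i-2})\notin L(v_i)$ while both previously chosen colors lie in $L(w_i)$; the dichotomy above resolves it by exploiting the equality of list sizes $|L(v_i)|=|L(w_i)|=5$. Since each greedy step preserves the invariant, after step $m$ the coloring $c$ is proper on $S$ and nice with respect to $L$.
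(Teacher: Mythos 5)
Your greedy coloring with the invariant on $3$-neighbors is precisely the argument the paper gives; the paper's case split (whether $\{c(v_{i-2}),c(v_{i-1})\}\subseteq L(w)$, whether $c(v_{i-2})\in L(v_i)$) is identical to yours, and the pigeonhole on $|L(v_i)|=|L(w_i)|=5$ resolving the tight subcase is the same observation. The only difference is that you spell out the subset argument more explicitly than the paper does.
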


\begin{proof}
Assume $v_0Sv_{i+1}$, where $i+2 \leq m$, has been colored nicely by $c$ and $c(v_i)=1$, $c(v_{i+1})=2$.
If there is no $w\in N(S)$ such that $w\sim\{v_i,v_{i+1},v_{i+2}\}$, then color $v_{i+2}$ arbitrarily from its list so that $c(v_{i+1})\neq c(v_{i+2})$.
If there is a $w\in N(S)$ such that $w\sim\{v_i,v_{i+1},v_{i+2}\}$, choose a color for $v_{i+2}$ more carefully.
If $1$ or $2$ is not in $L(w)$, then choose $c(v_{i+2})$ from $L(v_{i+2})-\{2\}$.
Otherwise, $L(w)=\{1,2,\alpha,\beta,\gamma\}$, for some colors $\alpha, \beta, \gamma$.
If $1\in L(v_{i+2})$, let $c(v_{i+2})=1$.
If $1\not\in L(v_{i+2})$, then there is $a\in L(v_{i+2})-L(w)$. Let $c(v_{i+2})=a$.
In each case, we have constructed a nice coloring of $v_0Sv_{i+2}$.
Since $|L(v_j)|=5$ for $j=2,\ldots,m$, the above argument may be applied along $S$ up through $v$ so that $S$ is nicely colored.
\end{proof}

\begin{lemma}\label{nice2}
Let $S$ be a shortest $(u,v)$-path in a planar graph $G$, where $S=v_0, v_1, v_2, \ldots, v_m$ with $u=v_0$ and $v=v_m$.
Let $G$ have no separating $C_3$ and no separating $C_4$. Let $L: V(G) \rightarrow 2^{\mathbb N}$ be an assignment of lists of colors to the vertices of $G$ where $|L(u)|=|L(v)|=4$ and $|L(x)|=5$ for $x \in S\cup N(S) -\{u, v\}$.
Then $S$ can be nicely colored with respect to $L$.
\end{lemma}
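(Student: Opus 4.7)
The plan is to emulate the greedy coloring from the proof of Lemma \ref{nice1}, starting from $u = v_0$ and walking along $S$, then handle the last step carefully because $|L(v_m)| = 4 < 5$. The cases $m \leq 2$ are direct: for $m = 1$ we pick $c(v_0), c(v_1)$ with $c(v_0) \neq c(v_1)$ ($\geq 3$ options); for $m = 2$ a brief case analysis on $L(v_0), L(v_2)$ and, if present, $L(w)$ with $w$ the $3$-neighbor of $\{v_0, v_1, v_2\}$, settles the claim, using the fact that $L(v_0)$ and $L(v_2)$ cannot both be disjoint $4$-subsets of the $5$-set $L(w)$.

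For $m \geq 3$, pick $c(v_0) \in L(v_0)$ and $c(v_1) \in L(v_1) \setminus \{c(v_0)\}$ arbitrarily, and apply the Lemma \ref{nice1} rule to color $v_2, \ldots, v_{m-1}$; each step succeeds because $|L(v_{i+2})| = 5$. At the final step, with $c(v_{m-2}) = 1$, $c(v_{m-1}) = 2$, and $L = L(v_m)$, rerunning the Lemma \ref{nice1} case analysis produces a valid $c(v_m) \in L$ unless we encounter the \emph{stuck configuration}: a $3$-neighbor $w$ of $\{v_{m-2}, v_{m-1}, v_m\}$ exists with $\{1, 2\} \subseteq L(w)$, $1 \notin L$, and $L \subseteq L(w)$, which forces $L(w) = L \cup \{1\}$. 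To defeat this, back up one step and re-pick $c(v_{m-1}) \in L(v_{m-1}) \setminus (L \cup \{c(v_{m-2})\})$: any such choice lies outside $L(w) = L \cup \{c(v_{m-2})\}$, trivializing the $3$-neighbor constraint at $v_m$ and leaving at least three valid options for $c(v_m)$. The re-pick fails only if $L(v_{m-1}) = L(w)$, in which case we go one level deeper and re-pick $c(v_{m-2})$ from the $5$-element list $L(v_{m-2})$, choosing it either inside $L$ (breaking $c(v_{m-2}) \notin L$) or outside $L(w)$ (making the $3$-neighbor constraint at $v_m$ vacuous).

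The main obstacle is verifying that the back-tracking always succeeds while respecting the $3$-neighbor constraints $w'$ at $\{v_{m-3}, v_{m-2}, v_{m-1}\}$ and $w''$ at $\{v_{m-4}, v_{m-3}, v_{m-2}\}$ inherited from the greedy. The key arithmetic is $|L(v_{m-2}) \cap L| + |L(v_{m-2}) \setminus L(w)| \geq |L(v_{m-2})| - |L(w) \setminus L| = 5 - 1 = 4$, so the deeper re-pick has at least four candidates to combine with properness and the Lemma \ref{nice1}-style $w''$ restriction. The structural fact that $S$ is a shortest path in a graph without separating $C_3$ or $C_4$ (via Lemma \ref{properties-shortest-path}) keeps the relevant $3$-neighbors sufficiently independent to prevent every back-tracking level from being blocked simultaneously, and the coloring produced is nice by construction.
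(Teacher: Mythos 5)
Your identification of the ``stuck configuration'' is exactly right and matches the paper: the only obstruction to the greedy is a $3$-neighbor $w$ of $\{v_{m-2},v_{m-1},v_m\}$ with $L(w)=L(v_m)\cup\{c(v_{m-2})\}$ and $c(v_{m-2})\notin L(v_m)$, and the dichotomy you want for the re-pick of $c(v_{m-2})$ --- either land inside $L(v_m)$ or outside $L(w)$ --- is precisely the dichotomy the paper exploits. But the two-level backtrack does not close. When you re-pick $c(v_{m-2})$ you must simultaneously respect properness with $c(v_{m-3})$ and the niceness constraint at a potential $3$-neighbor $w''$ of $\{v_{m-4},v_{m-3},v_{m-2}\}$. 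If both $c(v_{m-4})$ and $c(v_{m-3})$ lie in $L(w'')$, the $w''$ constraint forbids $c(v_{m-2})$ from taking any value in $L(w'')\setminus\{c(v_{m-4})\}$, which can be four of the five colors of $L(v_{m-2})$; combined with properness and your target set of size $\ge 4$, the intersection can be empty. Your appeal to Lemma \ref{properties-shortest-path} only gives uniqueness of $3$-neighbors, not independence of their list constraints, so it does not prevent the simultaneous block. The fix is not to freeze $c(v_0),\dots,c(v_{m-3})$ at all: the obstruction can propagate arbitrarily far back.

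The paper resolves exactly this by turning the back-track into an induction on the path length. Having reached the stuck configuration (your step), it deletes the ``bad'' color $a=c(v_{m-2})$ from $L(v_{m-2})$, producing a size-$4$ list at $v_{m-2}$, and applies the lemma itself to the shorter path $v_0Sv_{m-2}$ with endpoints of list sizes $4,4$. This re-colors the \emph{entire} prefix from scratch, so no stale constraints from $w''$ or beyond remain, and the inductive coloring automatically delivers $c'(v_{m-2})\in L(v_m)$ or $c'(v_{m-2})\notin L(w)$ --- your dichotomy --- after which $v_{m-1}$ and $v_m$ are finished by the nice1 rule. Your base case ($m\le 2$) and the overall shape of the argument are fine, but the middle needs the inductive restart rather than a bounded-depth re-pick.
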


\begin{proof}
The proof is by induction on $|V(S)|$.\\
If $|V(S)| \leq 2$, the statement follows trivially.
If $|V(S)|=3$, we can assume that there is a vertex $w$, where $w\sim \{v_0, v_1, v_2\}$, otherwise color $S$ properly from $L$.  If $c_0\in L(v_0) \cap L(v_2)$ for some $c_0$, let $c(v_0)=c(v_2)=c_0$ and color $v_1$ arbitrarily from $L(v_1)-\{c_0\}$.  If $L(v_0) \cap L(v_2) = \emptyset$, then $|L(v_0)\cup L(v_2)|=8$ and there is a color $c_0 \in (L(v_0)\cup L(v_2)) -L(w)$. Assume without loss of generality that $c_0 \in L(v_0)$. Then let $c(v_0)=c_0$, and color $v_1, v_2$ arbitrarily from their lists so the path $v_0, v_1, v_2$ is properly colored. As a result $|L_c(w)| \geq 3$.

Now assume the result holds for shortest paths on fewer than $m+1$ vertices.  Let $|V(S)|=m+1$.
Color $v_0Sv_{m-1}$ nicely with a coloring $c$.
If there is no vertex outside of $S$ adjacent to $v_{m-2}, v_{m-1}$ and $v$, then choose $c(v)$ from $L(v)-\{c(v_{m-1})\}$.
This gives a nice coloring of $S$.
So assume there is a vertex $w\in N(S)$ such that $w\sim\{v_{m-2},v_{m-1},v\}$.

If $c(v_{m-1})\not\in L(w)$ or $c(v_{m-2})\not\in L(w)$, then let $c(v)\in L(v)-\{c(v_{m-1})\}$.  If $c(v_{m-1})\in L(w)-L(v)$ and $c(v_{m-2})\in L(w)-L(v)$, then $L(v)$ contains a color not in $L(w)$.  Assign this color to $v$ to obtain a nice coloring of $S$.  So we can assume $c(v_{m-2})$ or $c(v_{m-1})\in L(v)\cap L(w)$.
If $c(v_{m-2}) \in L(v)$, let $c(v)= c(v_{m-2})$ providing a nice coloring of $S$.
Thus we can assume $L(v) = \{c_1, c_2, c_3, c_4\}$, $a=c(v_{m-2}) \neq c_i$ for all $i=1,2, 3, 4$, and $L(w) = \{a, c_1, c_2, c_3, c_4\}$.

Apply induction to $v_0Sv_{m-2}$ in the graph $G'$ induced in $G$ by this path and its neighbors, with a new list $L(v_{m-2}) -\{a\}$  assigned to $v_{m-2}$ and all other old lists.
There is a nice coloring $c'$ of $v_0Sv_{m-2}$ in $G'$. Note that it is a nice coloring of $v_0Sv_{m-2}$ in $G$.
We either have $c'(v_{m-2}) = c_i$, for some $i=1,2, 3, 4$, or $c'(v_{m-2} )\not \in L(w)$. If $c'(v_{m-2})\not\in L(w)$, color $v_{m-1}$ first so that if there is $w'\sim\{v_{m-3},v_{m-2},v_{m-1}\}$, then $|L_{c'}(w')|\ge3$.  Then let $c'(v)\in L(v)-\{c'(v_{m-1})\}$.  If $c'(v_{m-2})=c_i$, without loss of generality say $c'(v_{m-2})=c_1$, then let $c'(v)=c_1$ and color $v_{m-1}$ so that $|L_{c'}(w')|\ge3$.
\end{proof}

\begin{lemma}\label{nice3}
Let $S$ be a shortest $(u,v)$-path in a planar graph $G$, where $S= v_0, v_1, \ldots, v_m$ with $u=v_0$ and $v=v_m$.
Let $G$ have no separating $C_3$ and no separating $C_4$.
Let $L: V(G) \rightarrow 2^{\mathbb N}$ be an assignment of lists of colors to the vertices of $G$ with $|L(u)|=|L(v)|=1$ and $|L(x)|=5$ for $x \in S\cup N(S) -\{u, v\}$. Assume $Q(S)$ has at least two cut-edges.  Then $S$ can be nicely colored with respect to $L$.
\end{lemma}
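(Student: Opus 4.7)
The strategy is to exploit the two cut-edges of $Q(S)$ to split $S$ into three independently colorable subpaths and then stitch them together.

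Let $e_1 = v_av_{a+1}$ and $e_2 = v_bv_{b+1}$, with $a < b$, be two cut-edges of $Q(S)$; recall that cut-edges of $Q(S)$ lie in $S$. Removing them from $S$ partitions its vertices into the three subpaths $L = v_0Sv_a$, $M = v_{a+1}Sv_b$, and $R = v_{b+1}Sv_m$, each a shortest path in $G$; any of them may be trivial in the boundary cases $a=0$, $b=m-1$, or $b=a+1$. The crucial observation is that no $3$-neighbor of $S$ spans a cut-edge: if some $w \in N(S)$ had $w \sim \{v_i, v_{i+1}, v_{i+2}\}$ with $v_iv_{i+1}$ or $v_{i+1}v_{i+2}$ a cut-edge, that edge would sit inside a $3$-block of $Q(S)$ together with $w$, contradicting its cut-edge status. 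Thus every $3$-neighbor of $S$ lies entirely within one of $L$, $M$, $R$ and can be handled by the coloring of that single piece.

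The plan is then to color each piece separately. For $L$ (when $a \geq 1$), choose any $c(v_1) \in L(v_1) \setminus \{c(v_0)\}$ to mimic the two-fixed-colors hypothesis of Lemma \ref{nice1}, and then apply Lemma \ref{nice1} to obtain a nice coloring of $v_0Sv_a$. Symmetrically, choose $c(v_{m-1}) \in L(v_{m-1}) \setminus \{c(v_m)\}$ and apply Lemma \ref{nice1} in reverse to nicely color $R$. Finally, apply Lemma \ref{nice2} to the shortest path $M$ in $G$ with the reduced endpoint lists $L(v_{a+1}) \setminus \{c(v_a)\}$ and $L(v_b) \setminus \{c(v_{b+1})\}$, each of size $4$; this produces a nice coloring of $M$ that automatically satisfies $c(v_{a+1}) \neq c(v_a)$ and $c(v_b) \neq c(v_{b+1})$. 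The boundary cases (some piece being trivial) reduce to one of the above steps or to a direct coloring.

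To finish, one checks that the combined coloring is proper on $S$, because each piece is properly colored and the reduced endpoint lists of $M$ force propriety across the two cut-edges, and that it is nice, because every $1$- or $2$-neighbor of $S$ is automatically nice ($|L(w)| - d(w,S) \geq 3$) and every $3$-neighbor of $S$ lies in exactly one piece and is nice by that piece's coloring. The main technicality is that Lemma \ref{nice2} is invoked on $M$ whose $1$-neighbors $v_a$ and $v_{b+1}$ are already colored and thus have effective list size one; this is not a genuine obstacle, since the proof of Lemma \ref{nice2} only uses the size-$5$ lists of the $3$-neighbors of its path and the size-$4$ lists of its endpoints, and $1$-neighbors need nothing beyond the proper coloring constraint that the reduced endpoint lists of $M$ already enforce.
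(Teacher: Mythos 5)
Your proposal is correct and takes essentially the same route as the paper: split $S$ at the two cut-edges into three subpaths, nicely color the outer two via Lemma \ref{nice1} (after fixing an arbitrary second color to initialize), nicely color the middle one via Lemma \ref{nice2} with the endpoint lists pruned by the colors of the cut-edge partners, and observe that no $3$-neighbor of $S$ straddles a cut-edge, so each $3$-neighbor is handled entirely by one piece. The paper leaves the initialization of Lemma \ref{nice1} and the no-straddling observation implicit, whereas you spell them out, but the decomposition and stitching are identical.
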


\begin{proof}
Let $v_k v_{k+1}$ and $v_l v_{l+1}$ be two cut-edges of $Q(S)$, where $0\leq k<l<m$.
Using Lemma \ref{nice1} color $v_0Sv_k$ and $v_{l+1}Sv_m$ nicely with a coloring $c$.
If $v_{k+1} = v_l$, we are done by giving $v_l$ a color different from $c(v_k)$ and $c(v_{l+1})$.
Otherwise, delete $c(v_k)$ from $L(v_{k+1})$, delete $c(v_{l+1})$ from $L(v_l)$ and color $v_{k+1}Sv_l$ nicely from the updated lists using Lemma \ref{nice2}.  Since $v_0Sv_k$, $v_{k+1}Sv_l$, and $v_{l+1}Sv_m$ do not have pairwise common neighbors in $N_3(S)$, this gives a nicely colored $S$.
\end{proof}

The next lemma is a key lemma in this paper, stating that either a given shortest path between two precolored vertices could be nicely colored, or another subgraph that is close to that path could be nicely colored.

For a path $T'$, a center $v_c$ of $T'$, and an even positive integer $d\le |V(T')|-1$, we call the two vertices of $T'$ at distance (in $T'$) $\frac{1}{2}d$ from $v_c$ the \textbf{$d$-tag vertices} with respect to $v_c$, or simply tag vertices, of $T'$.

\begin{lemma} \label{nice4}
Let $S$ be a shortest $(u,v)$-path in a planar graph $G$ with a center $v_c$ and $40$-tag vertices $u^*$, $v^*$ with respect to $v_c$, where $S=v_0,v_1,\ldots,v_m$ with $v_0=u$ and $v_m=v$.
Assume $G$ contains no separating $C_3$ or $C_4$ and no configuration $D(X)$ or $W(X)$ with $\{u,v\}\cap X=\emptyset$.  Let $L: V(G) \rightarrow 2^{\mathbb N}$ be an assignment of lists of colors to the vertices of $G$ such that $|L(u)|=|L(v)|=1$ and $|L(x)|=5$ for $x\in V(G)-\{u,v\}$.
Then there is a connected graph $H = H(S, u, v) = uSu^*\cup H' \cup v^*Sv$ such that every vertex of $H'$ is at distance at most $21$ from $v_c$, and $H$ can be nicely colored from $L$.
\end{lemma}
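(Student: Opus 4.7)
The plan is to preserve as much of $S$ as possible, modifying only the middle portion near $v_c$ when necessary. First I would nicely color the two end-segments $uSu^*$ and $v^*Sv$ by running the greedy argument of Lemma \ref{nice1} outward from the precolored endpoints $u$ and $v$, after arbitrarily fixing a color on the neighbor of $u$ and of $v$ on $S$ so as to fit the hypothesis of that lemma. This produces nice colorings of both endpieces and reduces the problem to choosing a connector $H'$ between the already-colored vertices $u^*$ and $v^*$.

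The natural first choice is $H' = T := u^*Sv^*$, so that $H = S$. I would extend the coloring through $T$ by a Lemma \ref{nice3}-style argument, treating the colors chosen at $u^*$ and $v^*$ as precolorings and splitting $T$ at cut-edges of $Q(T)$. This succeeds whenever $Q(T)$ has at least two nontrivial cut-edges: the three resulting shortest subpaths have endpoints with effective lists of size $1$ or $4$, and Lemmas \ref{nice1} and \ref{nice2} color them consistently. Since $T$ itself sits within distance $20 \le 21$ of $v_c$, the radius condition on $H'$ is automatic in this case.

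The hard case, and the main obstacle, is when $Q(T)$ has fewer than two nontrivial cut-edges, so that $T$ is dominated by one (or two) large blocks of $Q(S)$ carrying many $3$-neighbors $w_j \sim \{v_{j-1}, v_j, v_{j+1}\}$. Here the hypothesis that $G$ contains no $D$- or $W$-configuration disjoint from $\{u, v\}$ becomes essential: every $3$-block of $Q(S)$ produces a copy of $K_4 - e$ on four vertices, so $D$-freeness forces at least one vertex of each $3$-block in $T$ to have degree at least $6$ in $G$, and larger or repeatedly overlapping blocks are constrained similarly by $W$-freeness. Using these structural restrictions together with the slack provided by the tag distance $20$, I would locate within distance $21$ of $v_c$ a subpath $v_i, \ldots, v_{i+k}$ of $T$ for which replacing the spine by the alternating detour $v_i, w_{i+1}, v_{i+2}, w_{i+3}, \ldots, v_{i+k}$ along consecutive $3$-neighbors produces a pair of new nontrivial cut-edges in the corresponding $Q(H)$. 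Taking $H'$ to be $u^*Sv_i$, this detour, and $v_{i+k}Sv^*$, every inserted vertex $w_j$ lies at distance $1$ from $S$ and is adjacent to some vertex within distance $20$ of $v_c$, so $H'$ lies in the required ball of radius $21$ around $v_c$.

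Finally I would verify that $H$ is connected (immediate from the construction) and admits a nice coloring by reapplying the two-cut-edge coloring scheme to $H$ in place of $S$, using Lemma \ref{properties-shortest-path} and the absence of separating $C_3$ and $C_4$ to confirm that every $3$-neighbor of $H$ outside $V(H)$ still loses at most two colors in the final coloring. The principal technical difficulty I expect is the detour construction itself, in particular the case analysis ensuring that an alternating detour can always be fit within distance $21$ of $v_c$ while introducing the required pair of cut-edges; this is exactly where the forbidden $D$- and $W$-configurations are used, and where the constant $40$ in the definition of the tag vertices gives just enough room.
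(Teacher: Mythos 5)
Your proposal captures the broad structure (nicely color the end segments $uSu^*$ and $v^*Sv$, then connect them through a modified middle piece, using forbidden configurations to control the hard case), but the central mechanism you propose does not actually work, and it is not what the paper does.

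The critical gap is the claim that replacing the spine of a large block by the alternating detour $v_i, w_{i+1}, v_{i+2}, w_{i+3}, \ldots$ ``produces a pair of new nontrivial cut-edges.'' This is true only for $p$-blocks with $p\geq 6$ (this is exactly Observation~1 of the paper's proof), but for remote $4$- and $5$-blocks it fails. In a $4$-block $x_i, x_{i+1}, x_{i+2}, x_{i+3}$ with $3$-neighbors $w_{i+1}, w_{i+2}$, the detour $T'=\ldots,x_i, w_{i+1}, x_{i+2},\ldots$ bypasses $x_{i+1}$, but $x_{i+1}$ remains a vertex of $G$ adjacent to $x_i$, $w_{i+1}$ and $x_{i+2}$, so $x_{i+1}\in N_3(T')$ and the edges $x_iw_{i+1}$, $w_{i+1}x_{i+2}$ lie in a common block of $Q(T')$; no cut-edge is created. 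The same defect appears for $5$-blocks. Because Observation~1 reduces to the case where all nontrivial blocks of every shortest $(u^*,v_c)$-path have at most five spine vertices, the $4$- and $5$-block cases are exactly where the real work happens, and your proposed construction collapses there.

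The paper's argument around this point is substantially different from what you sketch. It splits at $v_c$ (so that a cut-edge can be obtained on each side, as Lemma~\ref{nice3} requires two), and, when WLOG no shortest $(u^*,v_c)$-path has a nontrivial cut-edge, it fixes among those a path $T$ with the \emph{maximum number of $3$-neighbors}; this extremal choice is what prevents the existence of a vertex $w'$ adjacent to a $3$-neighbor of $T$ and to two vertices of $T$, and you never invoke it. Given this $T$: a nontrivial $3$-block is handled directly by adding the single $3$-neighbor $w$ to $H$ (no detour, and no appeal to $D$-freeness); a remote $5$-block is shown via a chain of forced vertices to produce the configuration $W(X)$; a remote $4$-block either allows one to add $w_{i+1},w_{i+2}$ to $H$ and color a $4$-cycle at the end (Case~1 — again, $H'$ is \emph{not} a path here, contrary to your construction), or forces the configuration $D(X)$ (Case~2). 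In particular your heuristic that ``$D$-freeness forces a degree-$\geq 6$ vertex in each $3$-block'' points at the wrong place: the paper associates $D$ with $4$-blocks and $W$ with $5$-blocks, and the $3$-block case is resolved by the max-$3$-neighbor property alone.

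In short: the easy cut-edge case, the reduction of $\geq 6$-blocks by detour, and the distance bookkeeping in your sketch are all fine and match the paper, but the core of the lemma — turning $4$- and $5$-blocks either into a colorable $H$ or into a forbidden $D$/$W$ configuration — requires the max-$3$-neighbor extremal choice, a split at $v_c$, and case-specific constructions that are not detours, none of which your proposal supplies.
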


\begin{proof}
Recall that every vertex in $V-V(S)$ is adjacent to at most three vertices in $S$, and if a vertex from $Q(S)-S$ is adjacent to vertices in $S$, these vertices in $S$ must be consecutive.

\noindent \textit{Observation 1.}  If $Q(S)$ has a $p$-block $B$, for a $p\geq 6$, then there is a shortest $(u,v)$-path $S'$ such that $Q(S')$ has a nontrivial cut-edge.

Let $B$ contain $v_i, v_{i+1}, \ldots, v_{i+5}$ and vertices $w_{i+k}$ not in $V(S)$, where $w_{i+k}\sim \{v_{i+k-1}, v_{i+k}, v_{i+k+1}\}$, for $k=1, 2, 3,4$.
Consider the shortest $(u,v)$-path
$$S' = v_0, v_1, \ldots, v_i, w_{i+1},  v_{i+2},  v_{i+3}, w_{i+4}, v_{i+5}, \ldots, v_m.$$
Then it is a routine check to see that $v_{i+2}v_{i+3}$ is a nontrivial cut-edge in $Q(S')$, as shown in Figure \ref{fig:observation1}.
\begin{figure}[h]
\begin{center}
\includegraphics{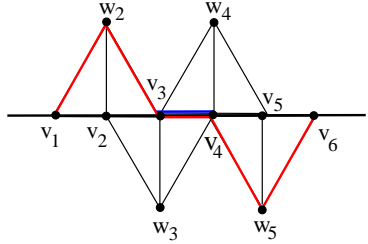}
\end{center}
\caption{Note that $v_{i+2}v_{i+3}$ is a nontrivial cut-edge in $Q(S')$.}
\label{fig:observation1}
\end{figure}
%v_{i+2}v_{i+3} is a cut-edge: Assume there is $w\not\in V(Q(S))$ such that $w\sim\{v_{i+2}.v_{i+3}\}$.  It cannot lie in any of the regions bounded by the triangles formed by vertices in $Q(S)$, otherwise there will be a separating $C_3$.  Also, $w$ cannot lie outside the boundary of $Q(S)$, otherwise there will be a $P$-separating $C_3$ or we contradict the planarity of $G$.

\noindent \textit{Observation 2.} We can assume at least one of the following holds:\\
(1) for every shortest $(u^*,v_c)$-path $S'$, each nontrivial block of $Q(S')$ is either a $3$-, $4$-, or $5$-block, \\
(2) for every shortest $(v_c,v^*)$-path $S'$, each nontrivial block of $Q(S')$ is either a $3$-, $4$-, or $5$-block.

If there is a shortest $(u^*,v_c)$-path $T'$ such that $Q(T')$ has a nontrivial cut-edge  and there is a shortest $(v_c, v^*)$-path $T''$ such that $Q(T'')$ has a nontrivial cut-edge, then Lemma \ref{nice3} implies $uSu^*T'v_cT''v^*Sv$ can be nicely colored.
Assume, without loss of generality that for every shortest $(u^*,v_c)$-path $S'$, $Q(S')$ has no nontrivial cut-edges.
Then Observation 1 implies there is no $p$-block of $Q(S')$ with $p\geq 6$.

\noindent Assume that part (1) of Observation 2 holds. Let $u'=u^*$, $v'= v_c$.
Let $T$ be a shortest  $(u', v')$-path with the largest number of $3$-neighbors.

\noindent \textit{Observation 3.}  If, for some shortest $(u',v')$-path $T$ with maximum number of $3$-neighbors,  $Q(T)$ has a nontrivial $3$-block, then there is a graph $H(S,u,v)$ satisfying the conditions of the lemma.

If such a block $B$ were to exist, say with consecutive vertices $x_i, x_{i+1}, x_{i+2}$ of $T$ and $w \not \in V(T)$, $w\sim \{x_i, x_{i+1}, x_{i+2}\}$, then there is no vertex $w' \not \in S\cup T$, such that $w'$ is adjacent to $w$ and two other vertices of $T$, otherwise there is a shortest $(u',v')$-path with more $3$-neighbors than $T$.
Let $H(S, u, v)$ be the graph induced by vertices of $uSu'Tv'Sv$ and $w$.
Nicely color $uSu'Tx_i$ and nicely color $x_{i+2}Tv'Sv$, then properly color $w$ and $x_{i+1}$ from remaining available colors in their lists.
Since there is no $3$-neighbor of $H(S, u, v)$ adjacent to $x_{i+1}$ and there is no such $3$-neighbor adjacent to $w$, this coloring is a nice coloring of $H(S,u,v)$.

\noindent Thus, we can assume that all nontrivial blocks of $Q(T)$ are $4$- or $5$-blocks. Since $dist(u',v') = 20$, there are remote nontrivial blocks in $Q(T)$.

\noindent \textit{Observation 4.} If $Q(T)$ has a remote $5$-block for some shortest $(u',v')$-path $T$ with maximum number of $3$-neighbors, then there is a graph $H(S,u,v)$ satisfying the conditions of the lemma.

\noindent Assume there is such a block $B$ with consecutive vertices $x_i, x_{i+1}, x_{i+2}, x_{i+3}, x_{i+4}$ of $T$ and vertices $w_{i+1}$, $w_{i+2}$,$w_{i+3}$ not in $T$ such that $w_k \sim \{x_{k-1},x_k, x_{k+1}\}$, for $k =i+1, i+2, i+3$.
From Observation 3, we can assume that every nontrivial block of $Q(T)$ is either a $4$- or a $5$-block.

Note first that there is no vertex $w$ adjacent to $w_{i+1}$ and two vertices of $T$, and there is no vertex $w$ adjacent to $w_{i+3}$ and two vertices of $T$.
Indeed, assume otherwise that  there is a vertex $w$ adjacent to $w_{i+1}$ and two vertices of $T$.
Then $w\sim \{x_{i-1}, x_i, w_{i+1}\}$.
Since all nontrivial blocks of $T$ have at least four vertices of $T$, and there are nontrivial blocks $B_1$ and $B_2$ of $Q(T)$ such that $|V(B)\cap V(B_1)|=|V(B)\cap V(B_2)|=1$, we see there is a vertex $w_{i-1}$ adjacent to $\{x_{i-2}, x_{i-1}, x_{i}\}$ and there is a vertex $w_{i-2}$ adjacent to $\{x_{i-3}, x_{i-2}, x_{i-1}\}$, as shown in Figure \ref{fig:obs4case2}.
\begin{figure}[h]
\begin{center}
\includegraphics{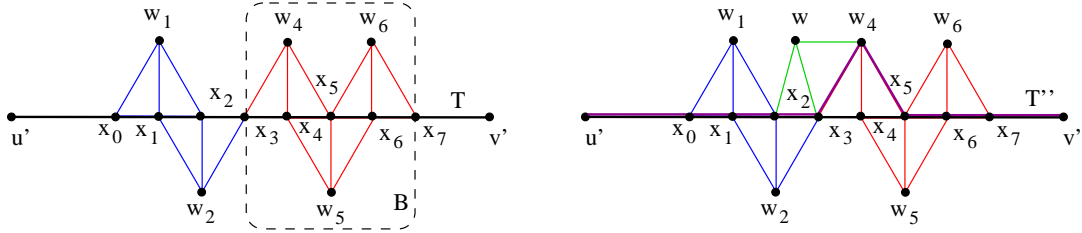}
\end{center}
\caption{Example corresponding to a case of Observation 4.}
\label{fig:obs4case2}
\end{figure}
Then $u'T x_{i-3}, x_{i-2}, x_{i-1}, x_i, w_{i+1}, x_{i+2} Tv'$ is a shortest $(u',v')$-path  $T''$ with a block in $Q(T'')$ having at least six vertices of $T''$, as can be seen in Figure \ref{fig:obs4case2}. This is a contradiction to Observation 2.
Similarly, it is impossible to have a vertex $w$ adjacent to $w_{i+3}$ and two vertices of $T$.

Assume now that there is no vertex $w$ adjacent to $w_{i+1}$ and  $w_{i+3}$ and a vertex of $T$.
Let $H(S, u, v)$ be a graph induced by vertices of $uSu'Tv'Sv$ and $w_{i+1}, w_{i+3}$, as shown in Figure \ref{fig:obs4case1}.  Note that while $w_{i+2}$ is shown in the figure, it is not a vertex in the graph $H(S,u,v)$.
\begin{figure}[h]
\begin{center}
\includegraphics{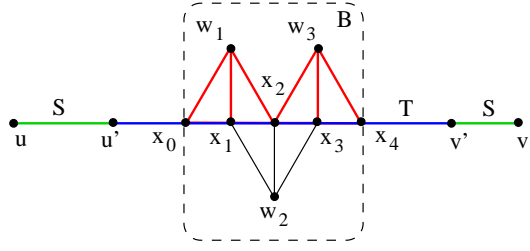}
\end{center}
\caption{An example of $H(S,u,v)$, as described in a case of Observation 4.}
\label{fig:obs4case1}
\end{figure}
To color $H(S, u,v)$ nicely, first color $uSu'Tx_{i}$ and $x_{i+4}Tv'Sv$ nicely, then color $x_{i+1}Tx_{i+3}$ properly so $w_{i+2}$ has at least three colors remaining in its list after the removal of colors used on adjacent vertices, and finally color $w_{i+1}$ and $w_{i+3}$ using available colors.

\noindent\textit{Fact.} We can assume for every shortest $(u',v')$-path $T=y_0,y_1,\ldots,y_l$, where $y_0=u'$, $y_l=v'$, with maximum number of $3$-neighbors and for every remote nontrivial $5$-block $B$ of $T$ with vertices $y_i,y_{i+1},y_{i+2},y_{i+3},y_{i+4}$ of $T$ and $w_j\sim\{y_{j-1},y_j,y_{j+1}\}$, for $j=i+1,i+2,i+3$, there is a vertex $w\sim\{w_{i+1},y_{i+2},w_{i+3}\}$.

Consider the shortest $(u',v')$-path $\tilde{T}_1=u'Ty_i,y_{i+1},w_{i+2},y_{i+3},y_{i+4}Tv'$.  There must be a vertex $w'\sim\{y_i,y_{i+1},w_{i+2}\}$, otherwise $y_iy_{i+1}$ is a cut-edge in $Q(\tilde{T}_1)$.  There must also be a vertex $w''\sim\{w_{i+2},y_{i+3},y_{i+4}\}$, otherwise $y_{i+3}y_{i+4}$ is a cut-edge in $Q(\tilde{T}_1)$.

Next, consider the shortest $(u',v')$-path $\tilde{T}_2=u'Ty_i,w_{i+1},w,w_{i+3},y_{i+4}Tv'$.  There must be a vertex $x\sim\{y_i,w_{i+1},w\}$, otherwise $y_iw_{i+1}$ is a cut-edge in $Q(\tilde{T}_2)$.  There must also be a vertex $x'\sim\{w,w_{i+3},y_{i+4}\}$, otherwise $w_{i+3}y_{i+4}$ is a cut-edge in $Q(\tilde{T}_2)$.
Finally, consider the shortest $(u',v')$-paths
$$\tilde{T}_3=u'Ty_i,y_{i+1},w_{i+2},y_{i+3},y_{i+4}Tv' \text{ and } \tilde{T}_4=u'Ty_i,w_{i+1},w,w_{i+3} ,y_{i+4}Tv'.$$  By the fact above, there must be vertices $z$ and $z'$ such that $z\sim\{w',w_{i+2},w''\}$ and $z'\sim\{x,w,x'\}$.
Thus, $G[X]$ where $X=\{y_{i+1},y_{i+2},y_{i+3},w_{i+1},w_{i+2},w_{i+3},w\}$ corresponds to the configuration $W(X)$ in $G$, as seen in Figure \ref{fig:obs4case3}, where the bold vertices represent $X$.
\begin{figure}[h]
\begin{center}
\includegraphics{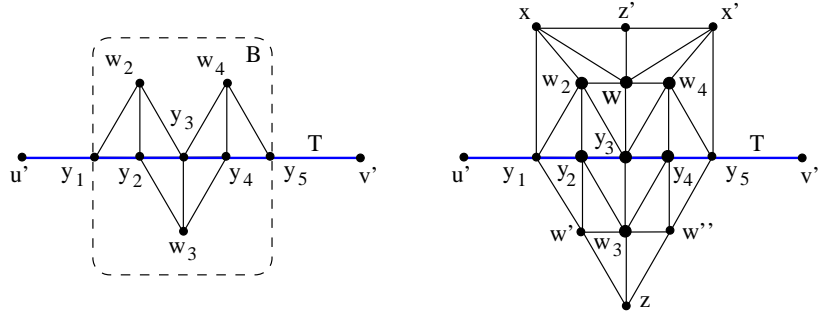}
\end{center}
\caption{The configuration $W$ as it arises locally around $T$.}
\label{fig:obs4case3}
\end{figure}
This completes the proof of Observation 4.

\noindent To summarize, we know that for any shortest $(u',v')$-path $T$, every nontrivial block of $Q(T)$ is a $3$-, $4$-, or $5$-block.
Moreover, if $T$ has the largest number of $3$-neighbors among all such shortest $(u', v')$-paths, then every remote nontrivial block of $Q(T)$ is a $4$-block.

\noindent To conclude the proof of Lemma \ref{nice4}, let $T$ be a shortest $(u',v')$-path with the largest number of $3$-neighbors among all such shortest $(u',v')$-paths.  Consider a remote nontrivial block of $Q(T)$ with consecutive vertices $x_i, x_{i+1}, x_{i+2}, x_{i+3}$ of $T$ and vertices $w_{i+1}, w_{i+2}$ not in $T$ such that $w_k \sim \{x_{k-1}, x_{k}, x_{k+1}\}$ for $k =i+1,i+2$.

\noindent Case 1. There is no $w$ adjacent to $w_{i+1}$ and two vertices of $T$, and there is no vertex $w$ adjacent to $w_{i+2}$ and two vertices of $T$.

Let $H(S, u, v)$ be the graph induced by vertices of  $uSu'Tv'Sv$ and $w_{i+1}, w_{i+2}$.
To color $H(S, u,v)$ nicely, first color $uSu'Tx_{i}$ and $x_{i+3}Tv'Sv$ nicely, then color $G[x_{i+1}, x_{i+2}, w_{i+1}, w_{i+2}]$ properly.

\noindent Case 2. There is, without loss of generality, a vertex $w$ adjacent to $w_{i+1}$ and two vertices of $T$.

If $w'\sim\{x_{i-1},x_i,w_{i+1}\}$ for some vertex $w'$, then consider the path $T'=u'Tx_i,w_{i+1},x_{i+2}Tv'$.  Then in $Q(T')$ there is a $p$-block with $p\ge6$, a contradiction as shown on the left in Figure \ref{fig:lemma5case2}.  So assume $w\sim\{w_i,x_{i+2},x_{i+3}\}$.
Consider a path $T''=u'Tx_i,w_{i+1},w,x_{i+3}Tv'$.  Observe that the edge $x_iw_{i+1}$ is a nontrivial cut-edge in $Q(T'')$ unless there is a vertex  $w'$ adjacent to $x_i, w_{i+1}$ and another vertex of $T''$.
This third vertex is either $x_{i-1}$ or $w$. It could not be $x_{i-1}$ as shown before.
\begin{figure}[h]
\begin{center}
\includegraphics{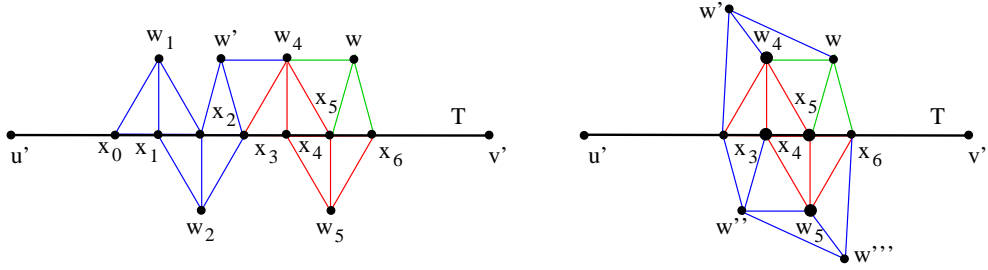}
\end{center}
\caption{Observe why there is no $w$ such that $w\sim\{w_{i+1}, x_{i+2}, x_{i+3}\}$.}
\label{fig:lemma5case2}
\end{figure}
Thus, $w'\sim \{x_i, w_{i+1}, w\}$.  See the right hand side of Figure \ref{fig:lemma5case2}.
Similarly, by considering the path $u'Tx_i, x_{i+1}, w_{i+2}, x_{i+3}Tv'$, we see there is a vertex $w'' \sim \{x_i, x_{i+1}, w_{i+2}\}$.
Finally, by considering the path $u'Tx_i, w'', w_{i+2}, x_{i+3}Tv'$, we have a vertex $w'''\sim \{w'', w_{i+2}, x_{i+3}\}$.
But now the graph $G[X]$, %$$G[\{x_i, x_{i+1}, x_{i+2}, x_{i+3},  w_{i+1}, w_{i+2}, w, w', w'', w'''\}]$$
where $X=\{x_{i+1},x_{i+2},w_{i+1},w_{i+2}\}$, gives the configuration $D(X)$ in $G$, as seen in Figure \ref{fig:lemma5case2} where the bold vertices represent $X$.

We see now, that a graph $H(S,u,v)$ in all the cases above was constructed by taking the union of $uSu'$, $v'Sv$, and a graph $H'$ induced by a shortest $(u',v')$-path $T$ (of length $20$) and, perhaps some vertices at distance $1$ from $T$.
Thus, any vertex of $H'$ is at distance at most $21$ to $v'=v_c$.
\end{proof}

\section{Proofs of Theorems} \label{Proofs}
\begin{proof}[Proof of Theorem \ref{two}]
Note if the two precolored vertices are adjacent, then the coloring is extendable by Thomassen's theorem.
In general, we use induction on $|V(G)|$ where the base case is precolored $u$ and $v$ connected by an edge.
Assume $G$ is connected, otherwise the result follows trivially by induction.

\noindent\textit{Claim.} $G$ has no separating $C_3$ or $C_4$.\\
Let $U$ be a vertex set of such a separating cycle. By the assumption of the theorem, $U$ does not separate $\{u,v\}$.
Let $V_1$ and $V_2$ be the vertex sets of disconnected plane graphs obtained by removing $G[U]$ from $G$, such that $\{u, v\} \subseteq V_1\cup U$.
By induction, color $G[V_1\cup U]$ from $L$.  This gives a proper coloring $c$ of $U$.
Now, in $G[V_2\cup U]$, there is a face with vertex set $U$ having color lists of size $1$ and all other vertices have color lists of size $5$.
Thus, by Theorem \ref{bms}, $G[V_2\cup U]$ is colorable from the corresponding lists.

Let $S={v_0,v_1,\ldots,v_m}$ be a shortest $(u,v)$-path in $G$, with $v_0=u$ and $v_m=v$, for $m\ge2$.
By Lemma \ref{nice1} there is a nice coloring $c$ of $v_0, \ldots, v_{m-2}$.
By Lemma \ref{properties-shortest-path}(4) there is at most one vertex adjacent to $v_{m-2}, v_{m-1}, v_m$ and at most one vertex adjacent to $v_{m-3}, v_{m-2}, v_{m-1}$, if $m\ge3$.
Let $c(v_{m-1}) \in L(v_{m-1})-(\{c(v_{m-2})\}\cup L(v_m))$.

If there is no vertex $x$, with $x\sim \{v_{m-2}, v_{m-1}, v_m\}$, and no vertex $x$, with $x\sim \{v_{m-3}, v_{m-2}, v_{m-1}\}$, then $c$ is a nice coloring of $S$.

Assume that there is a vertex $y$, with $y\sim \{v_{m-3}, v_{m-2}, v_{m-1}\}$, and there is no vertex $x$, with $x\sim \{v_{m-2}, v_{m-1}, v_m\}$, or, the other way around, there is no vertex $x$, with $x\sim \{v_{m-3}, v_{m-2}, v_{m-1}\}$ and there is a vertex $y$, with $y\sim \{v_{m-2}, v_{m-1}, v_m\}$.
Then $c$ is a proper coloring of $S$ such that $|L_c(p)|\geq 3$ for every $p\in N(S) -\{y\}$, and $|L_c(y)| \geq 2$.
Deleting $S$ and the corresponding colors from the lists of their neighbors in $G-S$ produces a list assignment where all vertices in a face containing $N(S)$ have lists of size at least $3$ (except for $y$), and all other vertices have lists of size $5$. Using Thomassen's theorem, $G-S$ can be colored from these lists.  Together with the coloring $c$ of $S$, it gives a proper $L$-coloring of $G$.

Finally, assume there is a vertex $x$, with $x\sim \{v_{m-3}, v_{m-2}, v_{m-1}\}$, and there is a vertex $w$, with $w\sim \{v_{m-2}, v_{m-1}, v_m\}$. Note that there is at most one additional vertex adjacent to $v_{m-1}$ and $v_m$, call it $z$ if it exists.
Delete $S$ from $G$ and add two new adjacent vertices $t$ and $s$ in the resulting face, also add edges $xt, ws, tz, sz,ty_i$, where $y_i\in N(v_{m-1})$ and $sx_i$, where $x_i\in N(v_m)$. Choose two new colors $\alpha$ and $\beta$ not used in any of the lists assigned to vertices of $G$.  Let $L'(t)=\{\alpha\}$, $L'(s) = \{\beta\}$, $L'(y_i)=L_c(y_i)\cup\{\alpha\}$, $L'(x_i)=L_c(x_i)\cup \{\beta\}$, $L'(z)=L_c(z)\cup \{ \alpha, \beta\}$, $L'(x)=L_c(x) \cup \{\alpha\}$, and $L'(w)= L_c(w) \cup \{\beta\}$. For every other vertex of this modified graph, let $L'$ be equal to $L_c$.  See Figure \ref{fig:thm2} for an illustration of this process.
\begin{figure}[h]
\begin{center}
\includegraphics{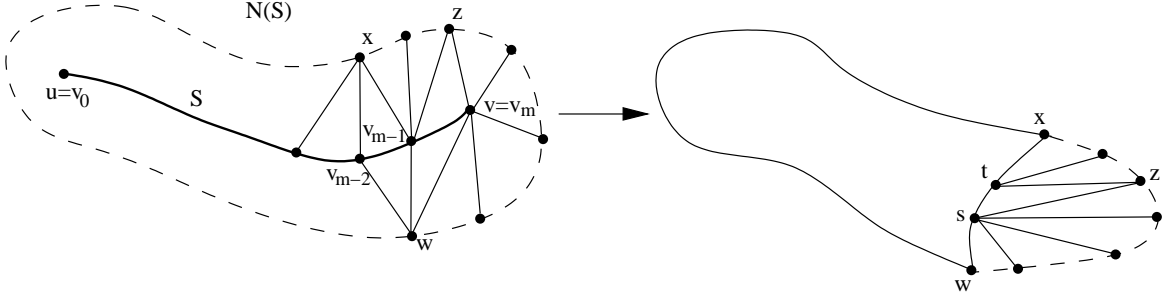}
\caption{The addition of vertices $t$ and $s$ in $G-S$.}
\label{fig:thm2}
\end{center}
\end{figure}
Observe that $L'$ satisfies the conditions of Thomassen's theorem, so there is a proper $L'$-coloring of this graph.  Thus, there is a proper $L'$-coloring of $G-S$, where no vertex uses colors $\alpha$ or $\beta$.
This is a proper $L_c$-coloring of $G-S$. Together with the coloring $c$ of $S$, it gives a proper $L$-coloring of $G$.
\end{proof}

\begin{proof}[Proof of Theorem \ref{Steiner}]
Let $T$ be a $(P,45)$-Steiner tree in $G'$, a reduced graph of $G$ satisfying the conditions of the theorem.
Let $L$ be an assignment of lists of colors to vertices of $G$ such that $|L(v)| = 1$ for $v\in P$ and $|L(v)|=5$ for $v\not\in P$.
We first color $G'$, then extend it to a proper $L$-coloring of $G$.

To color $G'$, first color special vertices of $T$ which are not in $P$ arbitrarily from their lists.
Let $\cal S$ be the set of branches in $T$ and let $S\in \cal S$ with endpoints $u_S,v_S$.
Let $H(S,u_S, v_S)=H(S)$ be the graph obtained by applying Lemma \ref{nice4} to $S$ and $c_S$ be a nice coloring of $H(S)$ from the corresponding lists (see Figure \ref{fig:H}).
Finally, let $c$ be a coloring of $H=\cup_{S\in {\cal S} } H(S)$, such that $c(v) =c_S(v) $ if $v\in H(S)$.

\noindent{\it Claim 1.} The coloring $c$ is a nice coloring of $H$.

Let $x, x'$ be two vertices of $H$ that do not belong to the same $H(S)$.
We shall prove that $x$ and $x'$ do not have common neighbors outside of $H$ and they are not adjacent.
Let $x \in H(S)$, $x'\in H(S')$, $S, S' \in {\cal S}$, $S\neq S'$.

If $x, x' \in V(T)$, then $x$ and $x'$ do not have a common neighbor outside of $T$ and they are not adjacent by part $(4)$ of the definition of a $(P,d)$-Steiner tree.

If $x\in V(T), x' \not\in V(T)$, then $x' \in V(H(S'))-V(S')$, thus $dist(x', v_{c'}) \leq 21$, where $v_{c'}$ is a center of $S'$, as
follows from Lemma \ref{nice4}.  From part $(3)$ of the definition of a $(P,d)$-Steiner tree,  we have that $dist(v_{c'}, x) \geq d$. Thus $dist(x, x') \geq d -21\geq 3$ when $d\geq 24$.

Finally if $x, x' \not\in V(T)$, then $x\in V(H(S))-V(S)$ and $x' \in V(H(S'))-V(S')$. Thus $dist(x, v_c), dist(x',v_{c'}) \leq 42$, where
$v_c, v_{c'}$ are centers of $S$ and $S'$, respectively.  Moreover $dist(v_c, v_{c'}) \geq d$. Thus $d(x, x') \geq d-42 \geq 3 $ if $d\geq 45$.

It follows that $c$ is a proper coloring of $H$.
To show that $c$ is nice, consider a vertex $v$ adjacent to $H$.  We see that  $v$ is adjacent to non-special vertices of $H(S)$ for at most one branch $S$ of $T$.
Since $c$ is a nice coloring of $H(S)$, it follows that $|L_c(v)|\geq 3$.

To conclude the proof of Claim 1, recall that $H$ is a connected graph containing all vertices of $P$.
Proposition \ref{nicelyH} implies that $G'$ is colorable from $L$.
To show that $G$ is colorable, it is sufficient to observe the following.

\begin{figure}[h]
\begin{center}
\includegraphics{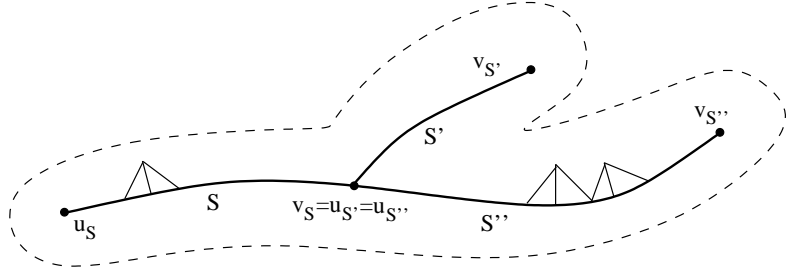}
\caption{An example of the graph $H$ obtained in the proof of Theorem \ref{Steiner}.}
\label{fig:H}
\end{center}
\end{figure}

\noindent{\it Claim 2.} Let $F$ be a graph, $P$ be a set of vertices, and $L$ be an assignment of lists of size $5$ to vertices of $V(G)-P$ and lists of size $1$ to vertices of $P$.  Let  $F' = R(F)$ be a reduction of $F$.
If $F'$ has a proper coloring from lists $L$ then $F$ has a proper coloring from lists $L$.

Let $c$ be a proper coloring of $F'$ from lists $L$.

If $F'$ was obtained from $F$  by removing the vertices in a region separated by $C_3$ or $C_4$, these vertices can be colored properly from $L$ using Theorem \ref{bms}.

If $F'$ was obtained from $F$ by removing the set $X$ of $4$ vertices, $y_1, y_2, z_1, z_2$ of configuration $D$, we see that $|L_c(y_i)| \geq 2$, $i = 1,2$ for the two vertices $y_1, y_2$ of degree two in $F[X]$ and $|L_c(z_i)|\geq 3$, $i = 1,2,$ for the two vertices $z_1,z_2$ of degree three in $F[X]$.  In the subgraph $F[X]$ each vertex has list size equal to its degree under list assignment $L_c$.  An $L_c$-coloring  of $F[X]$ can be found directly or by the results of \cite{KSW, T3}. Thus $F$ has a proper coloring from lists  $L$.

If $F'$ was obtained from $F$ by removing the set $X$ of  $7$ vertices $w, x_1, \ldots, x_6$ of configuration $W$, then we see that $|L_c(x_1)|, |L_c(x_4)|\geq 2$, $|L_c(x_2)|, |L_c(x_3)|, |L_c(x_5)|, |L_c(x_6)| \geq 3$, and $|L_c(w)|=5$.
Let $\alpha\in L_c(w)-(L_c(x_1)\cup L_c(x_4))$, so color $w$ with  $\alpha$ and remove $\alpha$ from $L_c(x_2),L_c(x_3),L_c(x_5),L_c(x_6)$.  What remains to be colored is a $6$-cycle with vertices having lists of size at least $2$, which is colorable by the classification of all $2$-choosable graphs by Erd\H{o}s et al. \cite{ERT}. Since $F[X]$ is properly colorable from lists $L_c$, $F$ is properly colorable from lists $L$.

This proves Claim 2.

Since $G'$ was obtained from $G$ via a sequence of reductions, the theorem follows.
\end{proof}

\begin{proof}[Proof of Theorem \ref{one-face}]

(1) Let $L$ be an assignment of lists of colors to vertices of $G$ such that $|L(x)|=5$ for all $x\not\in P$ and $|L(v_i)|=1$ for all $v_i\in P$. If $P$ is a set of vertices and edges with pairwise distance at least $3$, then for all $x \not\in P$, $x$ is adjacent to at most two vertices of $P$.
Thus, for every proper coloring $c$ of $G[P]$ from the corresponding lists $L$ and for all $x\not\in P$, we have $|L_c(x,P)|\geq 3$.  Moreover, $N(P)$ belongs to the frontier of a face in $G-P$.
Thus, by Proposition \ref{nicelyH}, $G$ is colorable from lists $L$.

(2) Without loss of generality, assume $C$ is on the unbounded face of $G$.  Let $P=\{v_0,v_1,\ldots,v_{k-1}\}\subseteq C$ be a set of at most six precolored vertices on the boundary of $C$.  Fix an assignment $L$ of lists of colors to the vertices of $G$ with $|L(v)|=5$ for all $v\in V(G)-P$ and $|L(v_i)|=1$ for all $v_i\in P$.  We shall show that $G$ is $L$-colorable provided the three forbidden configurations are not present.

We shall create a new graph $G'$ on the vertex set of $G$ with new lists $L'$.
Let $c_0, \ldots, c_{k-1}$ be distinct colors not present in $L(v)$ for any $v\in V(G)$.
Let $L'$ be a new list assignment with $L'(v_i):= \{c_i\}$ for $i= 0, \ldots, k-1$ and $L'(v) = L(v)-S_v \cup S'_v$ for each $v\in V(G)-P$, where $S_v$ is the set of colors used in lists $L$ of vertices in $P \cap N(v)$ and $S'_v $ is an arbitrary subset of the set of colors used in lists $L'$ of vertices of $P\cap N(v)$, such that $|S'_v|=|S_v|$.
In creating $L'$ we simply replaced the colors originally assigned to $P$ with new distinct colors, and replaced the old colors in the lists of vertices in the neighborhood of vertices of $P$.

Let a new plane graph $G'$ be obtained from $G$ by removing the edges $v_iv_{i+1}$ for $i=0, \ldots, k-1$
that correspond to non-consecutive vertices of $C$, and adding all edges $v_iv_{i+1}$ for $i=0, \ldots, k-1$ in the unbounded face of $G$.
The resulting graph has a new unbounded face with vertex set $P$, and, perhaps, some new edges.
By Theorem 7, $G'$ is $L'$-colorable by a coloring $c$ provided the three forbidden configurations are not present. Moreover, for any $v\not\in P$, we have $c(v) \not\in \{c_0, \ldots, c_k\}\cup S_v$, so $c(v) \in L(v)$ and $c(v) \not\in L(v_i)$ if $v\sim v_i$.  To create a proper $L$-coloring of $G$, replace the color $c_i$ with an element of $L(v_i)$ for $i=0, \ldots, k-1$.
\end{proof}

\begin{proof} [Proof of Theorem \ref{two-faces}]
Delete $P$ and the corresponding colors from the lists of adjacent vertices.
There are at most two faces, $F_1'$ or $F_1'$ and $F_2'$, in the graph $G-P$ such that the vertices adjacent to $P$ in $G$ belong to the boundaries of these two faces.  These vertices have lists of size at least $4$, and all other vertices in $G-P$ have lists of size at least $5$.
Call the resulting lists $L'$.
Add a vertex $v_i$ to the face $F_i'$ and make it adjacent to all vertices on $F_i'$,  $i = 1$, or $i=1,2$.
Let $\alpha$ be a color not used in any of the lists $L(v)$, $v\in V$.  Let $L''(v_1) = L''(v_2) = \{\alpha\}$,  $L''(v)= L'(v)\cup \{\alpha\}$, if $v\in V(F_1'\cup F_2')$ and $|L'(v)|=4$.
For all other vertices, let $L''(v)=L'(v)$.
Applying Theorem \ref{two} to the resulting graph with lists $L''$ allows for this graph to be properly colored from these lists.  We note here that it is not hard to see that this new graph does not contain any $\{v_1,v_2\}$-separating $C_3$s or $C_4$s because such a separating $C_3$ or $C_4$ would have to be made up of vertices and edges from the original graph and would have separated some of the precolored vertices of $G$, a contradiction.
This coloring gives a proper coloring of $G-P$ from lists $L'$, and thus it gives a proper coloring of $G$ from lists $L$.
\end{proof}

%%%%%%%%%%%%%%%%%%%%%%%%%%%%%%%%%%%%%%%%%%%%%%%%%%%%%%%%%%%%%%%%%%%%%%%%%%%%%%%%%

\section{Conclusions} \label{Conclusions}
We proved the question of Albertson has a positive answer if there are no short cycles separating precolored vertices and there is a nice tree containing precolored vertices.

We note here that by the definition of a $(P,d)$-Steiner tree, Theorem \ref{Steiner} can be applied to plane graphs with precolored vertices that are not far apart.  For example, let $G$ be a 100-cycle with vertices $v_0,v_1,\ldots,v_{99}$ and $P=\{v_1,v_{50},v_{98}\}$.  Then $G$ contains a $(P,48)$-Steiner tree obtained from deleting $v_0,v_{99}$ and incident edges.  The centers of the branches are far apart, but $dist(v_1,v_{98})=3$.

We believe that in a planar triangulation either such a tree could always be found, or there are small reducible configurations such as shown in Figure \ref{fig:reducible}.  The reducible configurations $D$ and $W$ are just two in a family of many reducible configurations of those types.  Modifying the definition of a reduced graph to include the removal of every reducible $K_4-e$ and every reducible $6$-wheel leads us to the following question.

\begin{question}
Is it the case that every reduced planar triangulation with a set $P$ of precolored vertices with $dist(P)\ge1000$ contains a $(P,45)$-Steiner tree?
\end{question}

If the above question has a positive answer, then by Theorem \ref{Steiner}, the precoloring of $P$ extends to a $5$-list coloring of $G$.  We did not strive to improve the constants here.  With more careful calculations, one could easily obtain smaller constants.

The condition of no separating short cycles seems to be essential.
Reducing the sizes of the lists, increasing the sizes of lists on so-called ``precolored'' vertices, or eliminating the distance condition in this problem is not possible even for a small number of precolored vertices, see Figure \ref{fig:distsize}.
Figure \ref{fig:tentacles} shows we cannot reduce the sizes of the lists, even if the vertices are on the same face.  This graph belongs to a family of graphs where the length of each path along the unbounded face from the outer triangles to the inner triangle must be divisible by three.  It is not hard to see that if the vertices of the inner triangle are assigned colors $1,2,3$, respectively, then one of the vertices with lists of size $2$ cannot be colored.
\begin{figure}[h]
  \centering
  \subfloat[][Non-extendable precoloring of three vertices at distance $2$.]{\label{fig:3precolor}\includegraphics{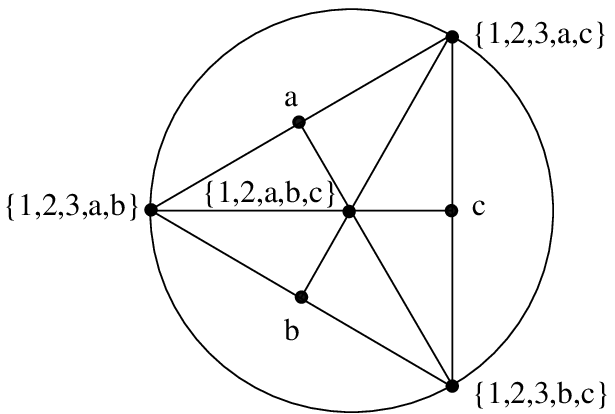}}
  \quad
  \subfloat[][Non-extendable precoloring of two vertices at distance $2$ where other vertices have lists of size $3$.]{\label{fig:2pre3lists}\includegraphics{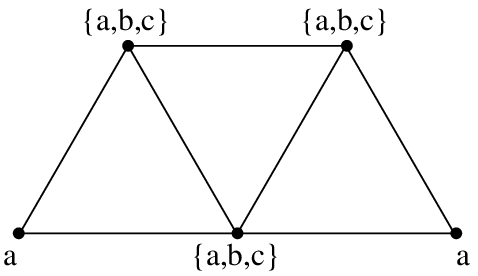}}
  \quad
  \subfloat[][Non-list-colorable graph with lists of size $1$, $2$, and $3$.]{\label{fig:123lists}\includegraphics{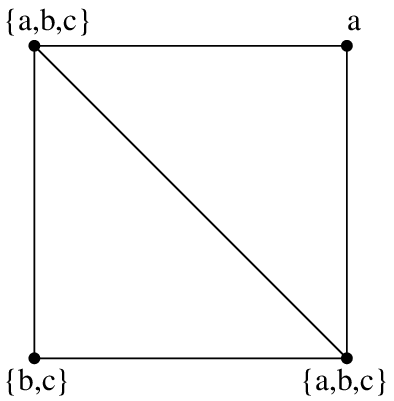}}
  \quad
  \subfloat[][Non-list-colorable graph with all other lists $\{1,2,3\}$.]{\label{fig:tentacles}\includegraphics{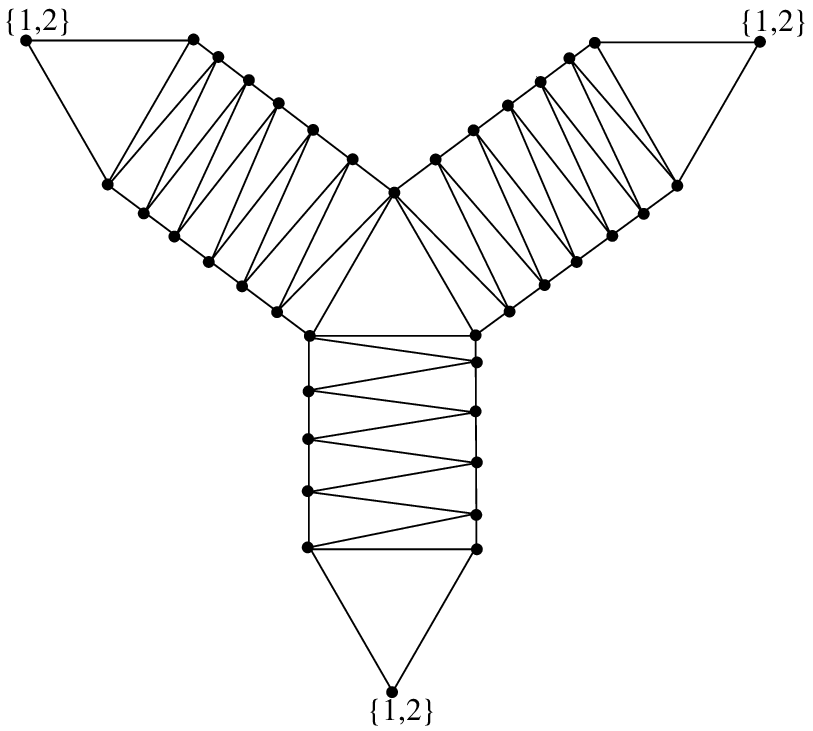}}
  \caption{Non-extendable precolorings.}
  \label{fig:distsize}
\end{figure}
However, we conjecture that a precoloring of two far-apart vertices is always extendable to a $5$-list coloring of a planar graph.

\section{Acknowledgements}
The authors wish to thank the anonymous referee for many helpful suggestions, especially an observation that greatly simplified and improved the proof of Theorem \ref{one-face} (2).

\end{document}